\newtheorem{conjecture}{Conjecture}
\newtheorem{theorem}{Theorem}
\newtheorem{corollary}{Corollary}
\newtheorem{proposition}{Proposition}
\numberwithin{equation}{section}
\newcommand{\eps}{\varepsilon}
\newcommand{\N}{\mathbb N}
\newcommand{\R}{\mathbb R}
\newcommand{\C}{\mathbb C}
\newcommand{\F}{\mathbb F}
\newcommand{\Sc}{\operatorname{Sc}}
\newcommand{\pr}[1]{\left( #1\right)}
\begin{document}

\title{Averages of long Dirichlet polynomials} 
\author{S. Bettin}
\address{DIMA - Dipartimento di Matematica, Via Dodecaneso, 35, 16146 Genova, Italy}
\email{bettin@dima.unige.it}

\author{J.B. Conrey}
\address{American Institute of Mathematics, 600 East Brokaw Road
San Jose, CA 95112, USA and 
School of Mathematics,
University of Bristol, Bristol, BS8 1TW, United Kingdom
} 
\email{conrey@aimath.org}
\keywords{Moments conjecture, divisor function in short intervals, divisor function in arithmetic progressions,  Riemann zeta-function}
\subjclass[2010]{Primary 11N37, Secondary 11M06, 11M50}

\begin{abstract}
We consider the asymptotic behavior of the mean square of truncations of the Dirichlet series of $\zeta(s)^k$. We discuss the connections of this problem with that of the variance of the divisor function in short intervals and in arithmetic progressions, reviewing the recent results on this topic. Finally, we show how these results can all be proved assuming a suitable version of the moments conjecture.
\end{abstract}

\maketitle

 \section{Introduction}
 
 \subsection{Averages of long Dirichlet polynomials}
The moments conjecture for the Riemann zeta-function states that as $T\to\infty$
\begin{equation}\label{moment_conjecture}
\frac 1 T \int_0^T |\zeta(1/2+it)|^{2k}~dt \sim g_k a_k (\log T)^{k^2},\qquad k\in\N,
\end{equation}
where $a_k$ and $g_k$ are the ``arithmetic'' and ``geometric'' constant
\begin{equation*}
a_k := \prod_p \left(1-\frac 1 p\right)^{(k-1)^2}\sum_{j=0}^{k-1} \frac{\binom{k-1}{ j}^2}{p^j},\qquad g_k:=\frac{G(1+k)^2}{G(1+2k)},
\end{equation*}
where $G$ denotes the Barnes $G$-function.
This has been proven for $k=1$ and $k=2$~\cite{HL,Ing} and heuristically obtained for $k=3$ and $k=4$~\cite{CGh,CGo}. The above conjecture for larger (and non integer) values of $k$ was formulated by Keating and Snaith in~\cite{KS} using a random matrix model for the Riemann zeta-function. Other approaches which lead to the same conjecture were later given in~\cite{GH} and in the series of works~\cite{CK1,CK2,CK3,CK4,CK5}, whereas in~\cite{CFKRS} (see also~\cite{CFKRS2}) the conjecture was extended to allow for shifts.
 
The classical approach to~\eqref{moment_conjecture} is that of approximating $\zeta(s)^k=\sum_{n\geq1} d_k(n) n^{-s}$
by appropriate truncations of its Dirichlet polynomial and analysing the mean-square of such. In the pursuit of proving the above conjecture
for values of $k$ larger than 2, one is then lead to consider in general the mean square of Dirichlet polynomials 
with coefficients $d_k(n)$,
$$\mathcal I_k(T,N):=\frac1{T}\int_0^T\bigg| \sum_{1\leq n\leq N} \frac{d_k(n)}{n^{1/2+it}}-\operatorname{Res}_{w=1}\bigg(\frac{\zeta^{k}(w)N^{w-\frac12-it}}{w-\frac12-it}\bigg)\bigg|^2 ~dt,$$
for various values of $k$ and $N$.
In view of~\eqref{moment_conjecture}, one expects $\mathcal I_k(T,T^\alpha)$ to grow on the scale of $(\log T)^{k^2}$ as $T\to\infty$, for any fixed $\alpha>0$. It is thus convenient to define
\begin{eqnarray*}
\mathcal  M_k(\alpha):=\lim_{T\to \infty}\frac{\mathcal I_k(T,T^\alpha)}{a_k  (\log T)^{k^2}},\qquad k\in\N, \alpha>0.
\end{eqnarray*}
If $k=1$, computations analogous to those required for the second moment of $\zeta$ easily give
\begin{eqnarray*}
\frac{1}{T}\int_0^T \bigg|\sum_{n\le N}\frac{1}{n^{1/2+it}}-\frac{N^{\frac12-it}}{\frac12-it}\bigg|^2~dt \sim \left\{ \begin{array}{ll} \log N & \mbox{if $N\le T$,}\\
\log T &\mbox{if $N>T$,}\end{array}\right.
\end{eqnarray*} 
as $N,T\to\infty$. This translates into
\begin{eqnarray*}
\mathcal M_1(\alpha)=  \begin{cases} \alpha & \text{if $0\le \alpha\le 1$,}\\
1 &\text{if $1<\alpha$.}
\end{cases}
\end{eqnarray*} 
Also, standard methods, this time relating to the fourth moment of $\zeta$ give 
\begin{eqnarray}\label{m2conjecture}
\mathcal M_2(\alpha)= \frac1{2^2!}\cdot  \begin{cases} \alpha^4 & 
\text{if $0\le \alpha\le 1$,}\\
 -\alpha^4+8\alpha^3-24\alpha^2+32\alpha -14 &\text{if $1<\alpha\le 2$}\\
2  &\text{if $2<\alpha$.}
\end{cases}
\end{eqnarray} 
(see Proposition~\ref{k2} below). As the asymptotic for the $6$-th moment of $\zeta$ is currently out of reach, one can't hope to prove a similar formula for $\mathcal M_3(\alpha)$ for all $\alpha$. However, 
the  ``recipe'' of~\cite{CFKRS} or heuristic methods such as those of~\cite{CGh} both lead to the conjecture that
\begin{equation}\label{m3conjecture}
\mathcal M_3(\alpha)= \frac1{3^2!}\cdot  \begin{cases} \alpha^9 & 
\mbox{if $0\le \alpha\le 1$}\\[0.2em]
-2 \alpha ^9+27 \alpha ^8-324 \alpha ^7+2268 \alpha ^6-8694 \alpha ^5+{} \\ \quad 19278  
 \alpha ^4-25452 \alpha ^3+19764 \alpha ^2-8343 \alpha +1479 &\mbox{if $1<\alpha\le 2$,}\\[0.2em]
\alpha ^9-27 \alpha ^8+324 \alpha ^7-2268 \alpha ^6+10206 \alpha ^5\\ \quad -30618 \alpha ^4+61236 \alpha ^3-78732 \alpha ^2+59049 \alpha -19641  &\mbox{if $2\le \alpha \le 3$,}\\[0.2em]
42  &\mbox{if $3 \le \alpha$,}
\end{cases}
\end{equation} 

Notice that, as for $\mathcal M_1$ and $\mathcal M_2$,  $\mathcal M_3$ is also predicted to be a continuous piece-wise polynomial. This observation is implicit in the works~\cite{CGo,CK1,CK2,CK3,CK4,CK5}, where the authors heuristically analyse the contributions of the various ranges of the variables of summation  to the moments conjecture main terms. 

The above piece-wise polynomials are also interesting because of their smoothness properties. Indeed, the  graphs of $\mathcal M_2(\alpha)$ and $\mathcal M_3(\alpha)$ given below show that they are smooth, monotonic, and symmetric.  Indeed, the piece-wise polynomial $P$ in~\eqref{m3conjecture} is 8-times continuously differentiable at $\alpha =0 $ and at $\alpha=3$ and is 4-times differentiable at 
$\alpha=1$  and at $\alpha=2$. Also, it satisfies the relation  $P(3-\alpha)=42-P(\alpha)$. In fact, it can be proven that the only piece-wise polynomial $f(\alpha)$, with pieces of degree at most 9, which is 0 for $\alpha<0$ and $\alpha^9$ for $0\leq\alpha< 1$, satisfies $f(3-\alpha)=42-f(\alpha)$ and has the same smoothness properties as $P$ is the piece-wise polynomial given in~\eqref{m3conjecture}.
Indeed, let $f(\alpha)$ share the above properties with $P(\alpha)$. The symmetry and the values of $f(\alpha)$ for $\alpha<1$ determine also the values of $f(\alpha)$ for $\alpha>2$, only leaving the range $1\leq\alpha\leq2 $ in question. Then the symmetry $f(\alpha)+f(3-\alpha)=42$  determines half of the 10 coefficients of $f$ in this interval and the 4-times smoothness at $\alpha=1$ determines the other 5.  

For larger values of $k$, it easily follows from~\cite[Corollary~3]{MV} that $\mathcal M_k(\alpha)=\frac1{k^2!}\alpha^{k^2}$ for $\alpha<1$, and one still expects $\mathcal M_k(k-\alpha)=g_k-\mathcal M_k(\alpha)$ as this is suggested by the moment conjecture~\eqref{moment_conjecture} and the functional equation for $\zeta$. We expect that also the piece-wise structure and the smoothness properties generalise to $k>3$. We refer to Conjecture~\ref{smoothnes_conjecture} below for a formal statement of these properties.

\begin{figure}[hh]
  \begin{subfigure}[t]{0.475\textwidth}
    \includegraphics[width=\textwidth]{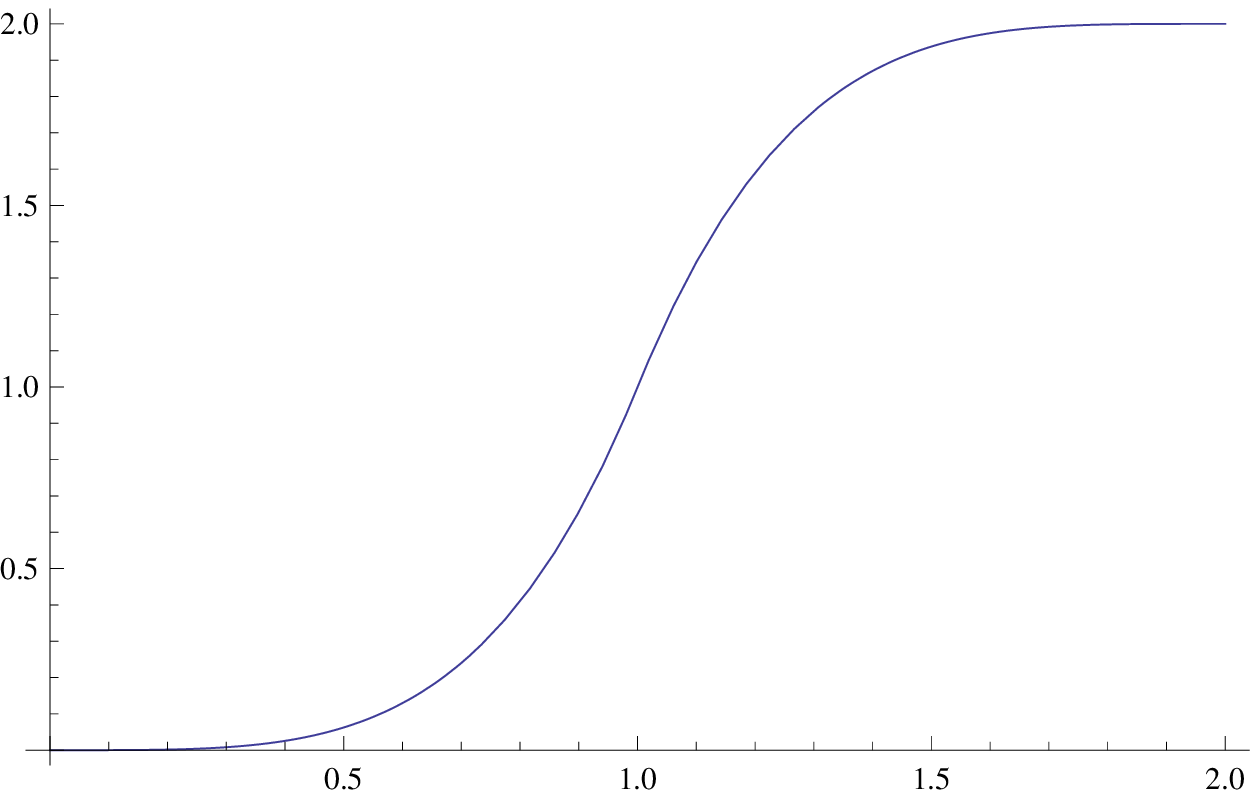}
    \caption{\sf The plot of $4!\mathcal M_2(\alpha)$, as conjectured in~\eqref{m2conjecture}, for $0<\alpha<2$.}
    \label{fig-a}
  \end{subfigure}\hfill
  \begin{subfigure}[t]{0.475\textwidth}
    \includegraphics[width=\textwidth]{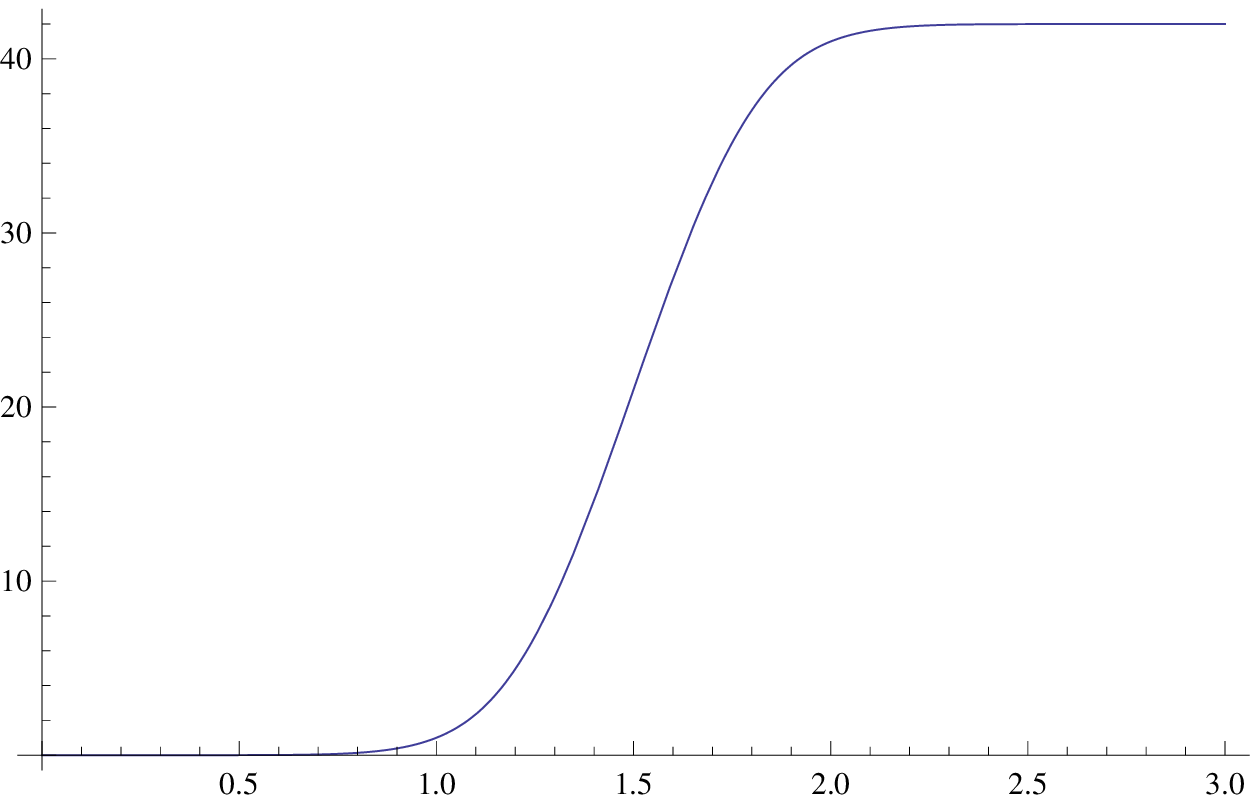}
    \caption{\sf The plot of $9!\mathcal M_3(\alpha)$, as conjectured in~\eqref{m3conjecture}, 
    for $0<\alpha<3$.}
    \label{fig-b}
  \end{subfigure}
  \label{fig:main}
\end{figure}

For $0\leq r\leq k$, let
$$
P_{r,k}(\alpha):=\frac1{(2\pi i)^kk!}\int_{C_1(r)}\hspace{-0,5em}\cdots \int_{C_k(r)}\bigg(\sum_{j=1}^kw_j+\alpha\bigg)^{k^2}\bigg(\prod_{i\neq j}(w_i-w_j)\bigg)\bigg(\prod_{i=1}^kw_i^{-k}(w_i+1)^{-k} \bigg)\boldsymbol {dw}
$$
where $\boldsymbol {dw}:=dw_1\cdots dw_r$ and where $C_i(r)$ denotes the circle, oriented counter-clockwise, with radius $\frac12$ and center $-1$ if $i=1,\dots, r$ and $0$ otherwise. 
Notice that $P_{r,k}(\alpha)$ is a polynomial of degree $k^2$. Also, it has a zero of order at least $(k-r)^2 + r^2$ at $\alpha=r$ (see Proposition~\ref{p1} below).  If $r>k$ let $P_{r,k}:=0$.  

\begin{conjecture}\label{smoothnes_conjecture}
Let $k\in\N$. For $\alpha\in\R$ one has 
\begin{equation}\label{formk}
\mathcal M_k(\alpha)=\frac{1}{k^2!}\sum_{0\leq r<\alpha}\binom{k}r P_{r,k}(\alpha).
\end{equation}
In particular, $\mathcal M_k(\alpha)$ is continuously differentiable  $ k^2 - 2 k \ell + 2 \ell^2-1$ times at $\alpha=\ell$ for all $\ell=0,\dots,k$.
\end{conjecture}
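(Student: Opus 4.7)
The strategy is to derive~\eqref{formk} from the shifted $2k$-th moment conjecture of Conrey--Farmer--Keating--Rubinstein--Snaith (CFKRS) via Mellin inversion. First, I would write
\begin{equation*}
\sum_{n\leq N}\frac{d_k(n)}{n^{1/2+it}}-\operatorname{Res}_{w=1}\bigg(\frac{\zeta^{k}(w)N^{w-\frac12-it}}{w-\frac12-it}\bigg)=\frac{1}{2\pi i}\int_{(c)}\zeta(\tfrac12+it+u)^k\,\frac{N^u}{u}\,du
\end{equation*}
for a vertical contour $(c)$ with $-\frac12<c<0$; squaring and averaging $t\in[0,T]$ then expresses $\mathcal I_k(T,N)$ as a double integral in Perron variables $u_1,u_2$ whose inner factor is the shifted mean square $\frac1T\int_0^T\zeta^k(\tfrac12+it+u_1)\,\zeta^k(\tfrac12-it+u_2)\,dt$.

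Second, I would invoke the CFKRS asymptotic for this shifted moment, which expresses it as a sum over the $\binom{2k}{k}$ ``swaps'' of the $u_1$-shifts with the $-u_2$-shifts, each swap producing a factor $(t/2\pi)^{-\Sigma}$ (with $\Sigma$ the sum of the swapped shifts) against an arithmetic kernel built from $\prod\zeta(1+z_i-w_j)$. Combining this expansion with the Perron kernel $N^{u_1+u_2}/(u_1 u_2)$ and evaluating the $u_1,u_2$ integrals by residues, the whole expression should collapse into a sum over the swap size $r$, weighted by $\binom{k}{r}$, of $k$-fold contour integrals in which each variable is integrated over a small circle around either $0$ (for unswapped variables) or $-1$ (for swapped variables). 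In this collapsed integral the factor $(w_1+\cdots+w_k+\alpha)^{k^2}$ arises from integrating the leading $L^{k^2}$ term of the CFKRS polynomial (with $L=\log(t/2\pi)$) against $N^{\sum u_i}=T^{\alpha\sum u_i}$ and taking the $T\to\infty$ limit; the antisymmetric factor $\prod_{i\neq j}(w_i-w_j)$ is the natural determinantal kernel of the recipe; and the polar factor $\prod w_i^{-k}(w_i+1)^{-k}$ emerges from the residue expansion of the arithmetic kernel at the collapsed configuration $z_i,w_j\in\{0,-1\}$. The summation range $0\leq r<\alpha$ reflects the fact that the residue at $u=-1$ contributes a term of the announced size only once $\alpha$ exceeds each integer threshold.

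Third, the piecewise smoothness follows algebraically from Proposition~\ref{p1}: the term $\binom{k}{\ell}P_{\ell,k}(\alpha)$ added at $\alpha=\ell$ vanishes there to order $(k-\ell)^2+\ell^2=k^2-2k\ell+2\ell^2$, so $\mathcal M_k$ has $k^2-2k\ell+2\ell^2-1$ continuous derivatives at $\alpha=\ell$.

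The principal difficulty will be the collapse in step two. The CFKRS recipe produces a $2k$-fold integral whose arithmetic kernel has intricate pole-zero structure built from $\prod\zeta(1+z_i-w_j)$, and one must verify that, after combining with the two Perron variables and extracting residues, all spurious singularities cancel against zeros of the Vandermonde $\Delta(z)\Delta(w)$ to leave exactly the clean factor $\prod w_i^{-k}(w_i+1)^{-k}$. A secondary technical point is controlling the lower-order terms of the recipe (which are polynomial in $L$ of degree less than $k^2$) and proving sufficient uniformity of the CFKRS asymptotic in the shift variables $u_1,u_2$ along the Perron contours to justify the interchange of $t$- and $u$-integrations.
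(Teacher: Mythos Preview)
Your overall strategy---Perron inversion, insert the shifted-moment conjecture, then extract the piecewise polynomial by residues---is the same as the paper's. Two points of comparison are worth making.

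First, the paper does not work from the raw CFKRS recipe with its $\binom{2k}{k}$ swap terms and $2k$-fold arithmetic kernel. Instead it \emph{assumes} the already-collapsed form (Conjecture~\ref{shifted moments conjecture}), in which the shifted moment with equal shifts $u_1,\dots,u_1$ and $u_2,\dots,u_2$ is expressed directly as a single $k$-fold contour integral $\iint_{R^k} T^{\sum w_j}\prod w_i^{-k}(w_i+s)^{-k}\prod_{i\neq j}(w_i-w_j)\,\boldsymbol{dw}$ with $s=u_1+u_2$. This bypasses entirely the ``principal difficulty'' you flag: the cancellation of spurious poles of $\prod\zeta(1+z_i-w_j)$ against Vandermonde zeros is absorbed into the statement of the conjecture (via the reference to~\cite{CRS}), not carried out in the proof.

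Second, the mechanism that produces the specific shape $\prod w_i^{-k}(w_i+1)^{-k}$ and the factor $(\sum w_j+\alpha)^{k^2}$ is not quite what you describe. After inserting Conjecture~\ref{shifted moments conjecture}, the paper reduces the two Perron variables to a single variable $z$ (so that the poles of the integrand in each $w_i$ sit at $0$ and $-z$, not at $0$ and $-1$), splits each $w_i$-contour into small circles around $0$ and $-z$, and then makes the change of variables $w_i\mapsto z w_i$. It is this rescaling that turns $(w_i)^{-k}(w_i+z)^{-k}$ into $w_i^{-k}(w_i+1)^{-k}$, produces the extra $z^{-k^2-1}$ whose residue yields $(\alpha+\sum w_j)^{k^2}/k^2!$, and explains why the threshold is $\sum\delta_j<\alpha$ (the sign of $\alpha-\sum\delta_j$ decides whether the $z$-contour can be pushed to $\pm\infty$). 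There is no ``residue at $u=-1$'' in the Perron integral itself. Relatedly, your contour $-\tfrac12<c<0$ should be $0<c\ll 1/\log T$: moving past $u=0$ would cost an extra $\zeta(\tfrac12+it)^k$ term.

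Your smoothness deduction from Proposition~\ref{p1} is exactly what the paper intends.
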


For $k\geq4$, the smoothness conditions in Conjecture~\ref{smoothnes_conjecture}, the symmetry and the values for $\alpha<1$ are not sufficient to determine $\mathcal M_k(\alpha)$. It would be interesting to see if one can characterise $\mathcal M_k(\alpha)$ by adding some other suitable simple conditions. 

We will show that Conjecture~\ref{smoothnes_conjecture} can be proven assuming a version of the moments conjecture~\eqref{moment_conjecture} in the simplified setting where all the $\zeta(1/2+it)$ and all $\zeta(1/2-it)$ are shifted by the same quantities and only the leading term is kept (see~\cite{CFKRS} and~\cite[Corollary~1]{CRS}). Let $k\in\N$ and let $Y=Y(T)$ be a parameter to be specified in the applications of the Conjecture.  We believe that $Y= T^A$ for any fixed $A\in\R$ should be admissible.

\begin{conjecture}[Shifted moments conjecture]\label{shifted moments conjecture}
For $T\geq1$ one has 
\begin{equation}\label{smce}
\begin{split}
&\frac1T\int_{0}^T\zeta(\tfrac12+it+s-z)^{k}\zeta(\tfrac12-it+z)^{k}\,dt\\[-0.5em]
&\hspace{7em}= \frac{a_k}{k!(2\pi i)^k}\iint_{R^k}T^{\sum_{j=1}^kw_j}\prod_{i=1}^kw_i^{-k}(w_i+s)^{-k}\prod_{i\neq j}(w_i-w_j)  \boldsymbol {dw} \\
& \hspace{7em}\quad+O_k\big((\log T)^{k^2-1+\eps}(1+\min(|s|,1)\log T)^{-k^2/2+1}\big)
\end{split}
\end{equation}
for $|\Re(z)|,|\Re(s)|\leq3/\log T$, $|\Im (z)|,|\Im (s)|\ll Y
$ and $R$ any piece-wise smooth path encircling once $0$ and $-s$  in the positive direction.
\end{conjecture}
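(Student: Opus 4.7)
My plan is to approach Conjecture~\ref{shifted moments conjecture} via the CFKRS recipe implemented rigorously, starting from an approximate functional equation for $\zeta^k(\tfrac12+it+s-z)\zeta^k(\tfrac12-it+z)$. The product of these two factors is, up to gamma-factor adjustments, a product of $2k$ zeta-values at the shifts $s-z,\dots,s-z,-z,\dots,-z$ (each repeated $k$ times). Writing each $\zeta$ via the approximate functional equation $\zeta(\tfrac12+\alpha+it)=\sum_{n\leq X}n^{-\tfrac12-\alpha-it}+\chi(\tfrac12+\alpha+it)\sum_{n\leq t/(2\pi X)}n^{-\tfrac12+\alpha+it}+O(\cdot)$ and expanding the product, the ``recipe'' main term is obtained by keeping only those terms in which an equal number of ``direct'' and ``dual'' sums are paired, and then replacing the oscillatory factors by formal integrals.

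The concrete execution would proceed as follows. First, after the expansion one collects contributions indexed by subsets $U\subseteq\{1,\dots,k\}$ (the set of $\zeta(\tfrac12+it+s-z)$ factors that are swapped via the functional equation) and similarly for the other side. Diagonal pairing on each term produces a sum
$$
A_k(w_1,\dots,w_k;s)\,\prod_{i=1}^k\Big(\zeta(1+2w_i)\zeta(1+2w_i+2s)\zeta(1+w_i+(w_j+s))^{\pm1}\Big)
$$
(schematically) where $A_k$ is an Euler product whose leading local factor matches $a_k$. Second, using Mellin inversion and the standard identity linking these zeta-product residues to a Vandermonde-weighted contour integral, one recognises the sum over swap-subsets as the expansion of the residues of
$$
\frac{a_k}{k!(2\pi i)^k}\iint_{R^k}T^{\sum w_j}\prod_{i}w_i^{-k}(w_i+s)^{-k}\prod_{i\neq j}(w_i-w_j)\,\boldsymbol{dw}
$$
at the poles $w_i=0,-s$; here the factor $\prod(w_i-w_j)$ encodes the Vandermonde coming from symmetrising the pole contributions, while the factors $w_i^{-k}$ and $(w_i+s)^{-k}$ come from the order-$k$ poles of $\zeta^k$ at $1$. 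Third, the error term of size $(\log T)^{k^2-1+\eps}(1+\min(|s|,1)\log T)^{-k^2/2+1}$ encodes both the off-diagonal error in the functional equation and the suppression coming from small shifts $s$; the power $-k^2/2+1$ reflects the ``resonance'' scale one sees when all the shifts collide.

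The main obstacle is step two applied to the off-diagonal terms: after pairing the truncated Dirichlet polynomials, one is left with sums of the form $\sum_{n\leq T}d_k(n)d_k(n+h)n^{-1/2-s'}$ for a range of shifts $h\ll T$, and one needs an asymptotic with a power-saving error that is uniform in $h$ and in the spectral parameters. For $k=1,2$ this is classical (Ingham and Atkinson/Motohashi respectively), so the conjecture is a theorem in these cases and even refinements are available from~\cite{CFKRS}. For $k\geq3$, however, no power-saving asymptotic for the $d_k$-correlation is known, and any proof of Conjecture~\ref{shifted moments conjecture} in that regime would require a breakthrough on the shifted $k$-divisor problem. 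One can, alternatively, attempt to bypass the correlation sums by integrating the conjectured formula for $\int|\zeta(\tfrac12+it)|^{2k}dt$ against a kernel in $s,z$, but the range $|\Im(s)|\ll Y=T^A$ demands far more uniformity in the shifts than current unconditional moment results allow, and this is where any rigorous attempt will be forced to remain conditional.
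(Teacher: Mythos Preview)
The statement you are addressing is Conjecture~\ref{shifted moments conjecture}, not a theorem: the paper does \emph{not} prove it and instead assumes it as a hypothesis in Theorems~\ref{t1}, \ref{scd}, and~\ref{tt3}. There is therefore no paper proof to compare against. The paper only remarks that the conjecture is known unconditionally for $k=1$ (with $Y=T^{2-\eps}$) and $k=2$ (with $Y=T^{\xi}$ for some small $\xi>0$), citing~\cite{Bet,HY,BBLR,Shi}; for $k\geq 3$ it is open.

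Your proposal is not a proof but a sketch of the CFKRS heuristic recipe, which is precisely how the conjectural main term was \emph{derived} in the first place (cf.~\cite{CFKRS,CRS}). You correctly identify the genuine obstruction: after expanding via the approximate functional equation and extracting the diagonal, one is left with shifted convolution sums $\sum_{n}d_k(n)d_k(n+h)$ for which no adequate asymptotic is known when $k\geq 3$. Your own final paragraph concedes that the argument ``will be forced to remain conditional'' in that range. So what you have written is a motivation for the conjecture, not a proof of it, and the paper makes no stronger claim.
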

The conjecture in a stronger form, including lower order terms, is known for $k=1$ and $Y=T^{2-\eps}$ (see~\cite{Bet}), whereas for $k=2$ and $Y=T^\xi$, for some small $\xi>0$, the proof of the Conjecture is implicit in~\cite{HY,BBLR} (see also~\cite{Shi}).
\begin{theorem}\label{t1}
Let $k\geq1$. Assume Conjecture~\ref{shifted moments conjecture} for $Y=T^{k/2+\eps}$ for some $\eps>0$, then Conjecture~\ref{smoothnes_conjecture} holds.
\end{theorem}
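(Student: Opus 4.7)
The plan is to represent $A_N$ as a contour integral via Perron's formula, apply Conjecture~\ref{shifted moments conjecture} to the resulting mean square, and reduce the main term to Conjecture~\ref{smoothnes_conjecture} through a sequence of explicit contour deformations. By Perron's formula, after shifting the contour so as to cross only the pole of $\zeta(s+w)^k/w$ at $w=1-s$ (whose residue cancels exactly the subtracted term in the definition of $A_N$), one has
$$A_N(\tfrac12+it) = \frac{1}{2\pi i}\int_{(c)}\zeta(\tfrac12+it+w)^{k}\frac{N^w}{w}\,dw + O(\text{Perron tail}),$$
for any fixed $c\in(0,1/2)$, with the integral truncated at $|\Im w|\leq Y=T^{k/2+\eps}$. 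Squaring, integrating in $t$, and inserting Conjecture~\ref{shifted moments conjecture} with shift $s=w+z$ into the inner $t$-average produces the heuristic main term
$$\mathcal I_k(T,N) \sim \frac{a_k}{k!(2\pi i)^{k+2}}\iint_{(c)^2}\oint_{R^k} T^{\sum_j w_j}\prod_{i=1}^{k} w_i^{-k}(w_i+w+z)^{-k}\prod_{i\neq j}(w_i-w_j)\frac{N^{w+z}}{wz}\,d\boldsymbol{w}\,dw\,dz.$$

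Specializing to $N=T^\alpha$ and rescaling all three batches of variables by $\log T$ --- $w=v/\log T$, $z=v'/\log T$, and the inner SMC variables $w_j=u_j/\log T$ --- pulls out the expected $(\log T)^{k^2}$ and sends the $u_j$-contour to an $O(1)$-scale contour enclosing $0$ and $-W$, where $W:=v+v'$. The elementary identity
$$\frac{1}{(2\pi i)^2}\int_{(c_0)}\int_{(c_0)}\frac{f(v+v')}{vv'}\,dv\,dv'=\frac{1}{2\pi i}\int_{(2c_0)}\frac{f(W)}{W}\,dW,\qquad c_0>0,$$
collapses the outer $v,v'$ integrals into a single line integral in $W$. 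I would then split each $u_j$-contour as the union of a small circle around $0$ and a small circle around $-W$; by the symmetry of the SMC integrand in $u_1,\dots,u_k$, the resulting $2^k$ pieces group into $k+1$ contributions indexed by the number $r\in\{0,\dots,k\}$ of indices sent to the $-W$-circle, each with multiplicity $\binom{k}{r}$. In the $r$-th contribution, the scaling $u_j=W\tilde u_j$ sends these circles to the contours $C_i(r)$ in the definition of $P_{r,k}$ and produces an overall factor $W^{-k^2}e^{W\sum_j\tilde u_j}$; the resulting $W$-integration
$$\frac{1}{2\pi i}\int_{(2c_0)}\frac{e^{(\alpha+\sum_j\tilde u_j)W}}{W^{k^2+1}}\,dW=\frac{(\alpha+\sum_j\tilde u_j)^{k^2}}{k^2!}\,\mathbf{1}[\Re(\alpha+{\textstyle\sum_j}\tilde u_j)>0]$$
yields an indicator which, since $\Re\tilde u_j$ lies near $0$ or near $-1$, collapses to $\mathbf{1}[r<\alpha]$ in the limit of small contour radius. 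This identifies the $r$-th contribution as $\binom{k}{r}P_{r,k}(\alpha)/k^2!$, and summing over $r$ reproduces Conjecture~\ref{smoothnes_conjecture}.

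The main obstacle will be the error analysis, which must control three sources: the Perron tail for $|\Im w|>Y$, the SMC error $O((\log T)^{k^2-1+\eps}(1+\min(|s|,1)\log T)^{-k^2/2+1})$ integrated against $(wz)^{-1}$, and the Fubini swaps and contour deformations used above. The hypothesis $Y=T^{k/2+\eps}$ is dictated by the Perron tail: since $N^{1/2}\leq T^{k/2}$ whenever $N=T^\alpha$ with $\alpha\leq k$, the standard Perron truncation estimates require $Y\gg T^{k/2}$ so that the $L^2$-norm of the tail on $[0,T]$ is $o(T^{1/2}(\log T)^{k^2/2})$, the size of $\|A_N\|_{L^2[0,T]}$. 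The SMC error integrates to a quantity of lower order in $\log T$, thanks to its polynomial decay in $|s|\log T$ away from $|s|\ll 1/\log T$, while the Fubini swaps and contour deformations are justified by the exponential damping $T^{\sum_j w_j}$ on the inner SMC contours together with polynomial decay of the main-term integrand as $|v|,|v'|\to\infty$.
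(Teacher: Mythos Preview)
Your plan is essentially the paper's own proof: Perron's formula to express $A_N$ as a truncated vertical integral, insertion of Conjecture~\ref{shifted moments conjecture} into the mean square, collapse of the double Perron integral to a single variable (the paper does this via $\frac{1}{2\pi i}\int \frac{dz_2}{(z+z_2)z_2}=-\frac{1}{z}+O(T'^{-1}\log T)$, which is the truncated form of your identity), splitting each $w_i$-contour into a small circle around $0$ and one around $-s$, and the final rescaling $w_i\mapsto zw_i$ that turns the exponential into $(\alpha+\sum_j w_j)^{k^2}/k^2!$ and the contours into the $C_i(r)$ defining $P_{r,k}$. The indicator $\mathbf 1[r<\alpha]$ arises in the paper exactly as you describe, by pushing the $z$-line to $+\infty$ when $\alpha-\sum_j\delta_j<0$.

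One point to flag: your claim that the SMC error ``integrates to a quantity of lower order in $\log T$'' uses the decay $(1+\min(|s|,1)\log T)^{-k^2/2+1}$, but when $k\leq 2$ this exponent is $\geq -1$ and the double integral against $|wz|^{-1}$ over $|\Im w|,|\Im z|\leq T'$ picks up an extra factor of $\log T$ or more, so the error is \emph{not} $o((\log T)^{k^2})$. The paper handles this by treating $k=1$ trivially and $k=2$ separately via the known fourth moment (Proposition~\ref{k2}), restricting the contour argument to $k\geq 3$; you should do the same.
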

We notice that the assumption of the uniformity with respect to the shifts can be reduced to $Y\ll (\log T)^\eta$ for any fixed $\eta>0$ if one considers the analogous problem where the sum over $n$ is truncated by a smooth cut-off.

\subsection{The divisor function in short intervals and in arithmetic progressions}

A phenomenon similar to that described for $\mathcal M_k(\alpha)$ arises also when analysing the variance of the divisor function over short intervals. 
Given, $x,H>0$, let 
\begin{align*}
\Delta_k(x,H):=\sum_{x<n\leq x+H}d_k(n)-\operatorname{Res}_{s=1}\Big(\zeta^{k}(s)\frac{(x+H)^s-x^s}s\Big)
\end{align*} 
be the error term in the $k$-th divisor problem over the interval $(x,x+H]$ and let
\begin{align*}
\mathcal V_k(X,H):=\frac1{HX}\int_{X}^{2X}|\Delta_k(x,H)|^2\,dx
\end{align*} 
be the variance of $\Delta_k(x,H)/\sqrt H$ on average over $x\asymp X$. The asymptotic for $\mathcal V_k(X,H)$ (or for averages very close to it) in the case $H=X^{1-1/\alpha}$, $\alpha>0$, was computed in~\cite{Jut,CSa,Ivi,Ivi2} for $k=2$ and by Lester~\cite{Les} for $\alpha>k-1$ if $k=3$ and, assuming the Lindel\"of hypothesis, if $k\geq4$. We also refer to~\cite{MT} for some upper bounds.

The analogous problem in function fields was solved by Keating, Rodgers, Roditty-Gershon, and Rudnick~\cite{KRRR}.
Denoting by $\mathscr M_n$ the set of degree $n$ monic polynomials in $\F_q[x]$ and by $d_k(f)$ the number of decompositions of a monic $f\in\F_q[x]$ as $f = f_1f_2\cdots f_k$ with $f_i\in\F_q[x]$ monic, they showed that, for $h\geq0$, $n\geq3k$\footnote{
In~\cite{KRRR} this condition is replaced by $0\leq h\leq \min(n-5,n-n/k-2)$ which, for $n\geq 3k$, rewrites as $h\leq n-n/k-2$. This condition can then be dropped since it trivially implies that $I_k(n,n-h-2)=0$, whereas the left hand side of~\eqref{KR3} is $O(q^{h+\frac12})$ by~\cite[(1.30-31)]{KRRR}.}, one has
 \begin{align}\label{KR3}
\frac1{q^n}\sum_{A\in\mathscr M_n}\bigg|\sum_{f\in M_n\atop
\deg(f-A)\leq h}d_k(f)-q^{h+1}\binom{n+k-1}{k-1}\bigg|^2= q^{h+1}I_k(n,n-h-2)+O(q^{h+\frac12})
\end{align}
 as $q\to\infty$,  
where $I_k(n,N)$ is the matrix integral
\begin{align*}
I_k(n,N):=\int_{U(N)}\bigg|\sum_{j_1+\cdots+j_k=n\atop 0\leq j_1,\dots,j_k\leq N}\Sc_{j_1}(A)\cdots \Sc_{j_k}(A)\bigg|^2\,dA,
\end{align*}
with $dA$ denoting the Haar measure on the unitary group $U(N)$ and $\Sc_j(A)$ denoting the $j$-th secular coefficient of $A$, i.e.
\begin{align*}
\Lambda_A(z):=\det(\mathbbm 1_n+zA):=\sum_{j=0}^n\Sc_j(A)z^j.
\end{align*}
They then show that for all $m,N\in\N$ one has
\begin{align}\label{ffik}
I_k(n,N)=\gamma_k(n/N)N^{k^2-1}+O_k(N^{k^2-2}),
\end{align}
where 
\begin{align*}
\gamma_k(\alpha):=\frac1{k! G(1+k)^2}\iint_{[0,1]^k}\delta(w_1+\cdots +w_k-\alpha)\bigg(\prod_{i<j}(w_i-w_j)^2\bigg)\,\boldsymbol {dw}
\end{align*}
and $\delta$ is the Dirac $\delta$-function. Other expressions of $\gamma_c$ are also known. Indeed, in~\cite{KRRR} $\gamma_c$ is expressed in terms of a lattice
point count, whereas in~\cite{BGR} it is shown that $\gamma_k$ is the inverse Fourier transform of a Hankel determinant satisfying a Painlev\'e V equation. We also refer to~\cite{GR}, where the case of non-integer $k$ is considered and where a connection with $z$-measures is highlighted, and to Section~\ref{rmt} below for an alternative complex analytic approach to~\eqref{ffik}.

The usual analogy between the number field and the function field case can then be used to translate this result to the number field setting. For $\alpha>0$ and $k\in\N$, let
\begin{eqnarray*} 
\mathcal C_k(\alpha):=\lim_{X\to \infty}\frac{ \mathcal V_k(X,X^{1- 1/\alpha})}{a_k  (\log (X^{1/\alpha}))^{k^2-1}}.
\end{eqnarray*}
Then, one expects the following to hold (see~\cite[Conjecture~1.1]{KRRR}).
\begin{conjecture}\label{conjKR3}
For $k\geq1,\alpha>0$ we have $\mathcal C_k(\alpha)=\gamma_k(\alpha)$.
\end{conjecture}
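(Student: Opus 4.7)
The plan is to follow the strategy of Theorem~\ref{t1}: reduce $\mathcal V_k(X,H)$ to a weighted mean square of $|\zeta(1/2+it)|^{2k}$, apply Conjecture~\ref{shifted moments conjecture}, and identify the resulting contour integral with $\gamma_k(\alpha)$.

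First, starting from Perron's formula, shifting the contour to the critical line, and absorbing the pole at $s=1$ into the main term gives
$$\Delta_k(x,H) = \frac{1}{2\pi i} \int_{(1/2)} \zeta(s)^k \frac{(x+H)^s - x^s}{s}\, ds + (\text{admissible error}).$$
Squaring and integrating in $x$ over $[X,2X]$, a Plancherel-type identity in the multiplicative variable yields
$$\mathcal V_k(X, H) = \frac{1}{2\pi H X} \int_{-\infty}^{\infty} \left|\zeta(\tfrac12+it)\right|^{2k}\, K_{X,H}(t)\, dt + (\text{lower order}),$$
where $K_{X,H}(t)$ is an explicit kernel which, upon setting $H = X^{1-1/\alpha}$ and $N = X/H = X^{1/\alpha}$, is concentrated on $|t|\asymp N$ and decays rapidly for $|t|\gg N$.

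Next, I would apply Conjecture~\ref{shifted moments conjecture} with vanishing shifts, writing $|\zeta(\tfrac12+it)|^{2k}$ as the diagonal value of $\zeta(\tfrac12+it+s-z)^k\zeta(\tfrac12-it+z)^k$ and substituting the conjectural main term as a $k$-fold contour integral over $w_1,\dots,w_k$ encircling $0$ and $-s$. Interchanging the orders of integration, the $t$-integral against $K_{X,H}(t)$ produces an explicit Mellin factor that, after rescaling $w_j\mapsto w_j/\log N$, localises the $w$-contour near the hyperplane $w_1+\cdots+w_k=\alpha$. Letting the external shift $s\to 0$ and carrying out a residue calculation on the resulting integrand $\prod_{i\neq j}(w_i-w_j)\prod_i w_i^{-k}(w_i+s)^{-k}$, one recovers the simplex integral $a_k(\log N)^{k^2-1}\gamma_k(\alpha)$; dividing by the normalisation $a_k(\log X^{1/\alpha})^{k^2-1}$ gives $\mathcal C_k(\alpha)=\gamma_k(\alpha)$.

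The main obstacle is twofold. First, the required range of uniformity in Conjecture~\ref{shifted moments conjecture} is $Y=T^{1/\alpha+\eps}$, which is stronger than the $Y=T^{k/2+\eps}$ used for Theorem~\ref{t1} when $\alpha<2/k$; it remains within the believed admissible range $Y=T^A$ for arbitrary $A>0$ but is a genuinely stronger input. Second, the contour manipulation that collapses the $k$-fold CFKRS integrand onto the $(k-1)$-dimensional simplex carrying the weight $\prod_{i<j}(w_i-w_j)^2$ is the technically delicate step, since the Mellin factor introduced by $K_{X,H}$ must be shown to reduce the $s\to 0$ limit to an integration over the constraint $\sum w_i=\alpha$ with precisely the weight appearing in the definition of $\gamma_k$. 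This is analogous to the residue mechanism outlined in Section~\ref{rmt} for the function-field integral $I_k(n,N)$.
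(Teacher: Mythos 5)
The central step of your argument --- the ``Plancherel-type identity'' reducing $\mathcal V_k(X,H)$ to a single integral $\frac{1}{2\pi HX}\int|\zeta(\tfrac12+it)|^{2k}K_{X,H}(t)\,dt$, followed by an application of Conjecture~\ref{shifted moments conjecture} ``with vanishing shifts'' --- does not work, and the failure is not merely technical. Averaging $x$ over $[X,2X]$ against a weight does not diagonalise the double integral $\iint\zeta(\tfrac12+iz_1)^k\overline{\zeta(\tfrac12+iz_2)^k}\cdots\,dz_1\,dz_2$ coming from squaring the Perron integral; it produces a factor $X^{i(z_1-z_2)}\hat g(1+i(z_1-z_2))$ that confines $s:=z_1-z_2$ to $|s|\ll(\log X)^{\eta}$ but is not a delta function. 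If you nevertheless pass to the diagonal (equivalently, apply the moments conjecture at $s=0$), the resulting nonnegative kernel $K_{X,H}$ is concentrated on $|t|\asymp X/H$, and the $2k$-th moment asymptotic yields a quantity of order $(\log X^{1/\alpha})^{k^2}$ --- one full power of $\log$ larger than the true size $(\log X^{1/\alpha})^{k^2-1}\gamma_k(\alpha)$, with no mechanism left to produce the piecewise structure or the support $[0,k]$ of $\gamma_k$. The entire content of the result lies in off-diagonal cancellation: one must carry the shift $s$ through the CFKRS main term, where the factors $\prod_i(w_i+s)^{-k}$ and $(X/H)^{\alpha s}$, the splitting of each $w_i$-contour into circles about $0$ and $-s$, and a final residue computation in $s$ jointly produce both the extra $1/\log$ and the sum $\frac{1}{k^2!}\sum_{0\le r<\alpha}\binom{k}{r}P'_{r,k}(\alpha)$. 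This is exactly how the proof of Theorem~\ref{scd} proceeds: the range $|s|<(\log X)^{-1+\eps}$ carries the main term, while the ranges up to $(\log X)^{\eta}$ are shown to be negligible via Corollary~\ref{corcon} and integration by parts.

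Two further points. Your uniformity assessment is backwards: precisely because the smooth $x$-average confines the shift to $|s|\ll(\log X)^{\eta}$, Theorem~\ref{scd} needs Conjecture~\ref{shifted moments conjecture} only for $Y\ll(\log T)^{\eta}$ --- much weaker than the $Y=T^{k/2+\eps}$ used for Theorem~\ref{t1}, not stronger. The quantity $X^{1/\alpha}$ is the length of the $t$-integration, i.e.\ the parameter $T$ of the conjecture, not the shift range $Y$. Finally, the residue computation does not directly ``recover the simplex integral'': it yields $\frac{1}{k^2!}\sum_{0\le r<\alpha}\binom{k}{r}P'_{r,k}(\alpha)$, and identifying this with $\gamma_k(\alpha)$ is a separate nontrivial step, supplied in the paper by Corollary~\ref{cff} via the unitary-group computation of Section~\ref{rmt}.
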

The function  $\gamma_k(\alpha)$ is easily seen to be a piece-wise polynomial of degree $k^2-1$, which is supported  on $[0,k]$ and satisfies $\gamma_k(\alpha)=\gamma_k(k-\alpha)$. Also, in~\cite{KRRR} it is shown that $\gamma_k(\alpha)$ can be written as 
$$
\gamma_k(\alpha)=\sum_{0\leq r<\alpha}Q_{r,k}(\alpha),
$$
for certain polynomials $Q_{r,k}$ of degree $k^2-1$ with a zero of order at least $(k-r)^2+r^2-1$ at $\alpha=r$. In particular $\gamma_k(c)$ is differentiable at least $(k-r)^2+r^2-2$ times at $\alpha=r$ (see also~\cite{BGR} for an alternative proof). 

The similarity between the properties of $\mathcal M_k$ and $\mathcal C_k$ suggests there might be a relation between them. Indeed, the comparison of $\mathcal M_k$ and $\mathcal C_k$ for the first few values of $k$ led Basor, Ge and Rubinstein to conjecture that $M'_k(\alpha)=\mathcal C_k(\alpha)$.\footnote{This also explains the observation that $\int_0^k\gamma_k(c)=g_k$ made in~\cite[Footnote 2]{RS}} Under the shifted moments conjecture, we show that this is indeed the case.

\begin{theorem}\label{scd}
Let $k\geq1$. Assume Conjecture~\ref{shifted moments conjecture} for $Y\ll (\log T)^\eta$ for some fixed $\eta>0$. Then for all $\alpha>0$ we have  $\mathcal C_k(\alpha)=\frac{1}{k^2!}\sum_{0\leq r<\alpha}\binom{k}r P'_{r,k}(\alpha)=\gamma_k(\alpha)$. In particular, Conjecture~\ref{conjKR3} holds.
\end{theorem}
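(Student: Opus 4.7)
The plan is to attack the theorem in two logically distinct pieces: an analytic reduction of $\mathcal C_k(\alpha)$ to $\frac{1}{k^2!}\sum_{0\leq r<\alpha}\binom{k}{r}P'_{r,k}(\alpha)$ using Conjecture~\ref{shifted moments conjecture}, followed by an unconditional identification of that sum with $\gamma_k(\alpha)$.

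For the analytic piece, I would first represent $\Delta_k(x,H)$ via a Perron-type contour integral shifted to the critical line,
$$
\Delta_k(x,H) = \frac{1}{2\pi}\int_{-\infty}^{\infty} \zeta(\tfrac12+it)^k F(x,H;t)\,dt + (\text{admissible error}),
$$
with $F(x,H;t):=((x+H)^{1/2+it}-x^{1/2+it})/(\tfrac12+it)=\int_x^{x+H}u^{-1/2+it}\,du$. Squaring, integrating $x$ over $[X,2X]$, and setting $T:=X^{1/\alpha}=X/H$ expresses $\mathcal V_k(X,H)$ as a double integral in $(t_1,t_2)$ against the kernel $K(t_1,t_2):=\int_X^{2X}F(x,H;t_1)\overline{F(x,H;t_2)}\,dx$, which is effectively concentrated on $|t_j|\asymp T$ and decays in $|t_1-t_2|$. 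A dyadic decomposition in the shift $s:=t_1-t_2$ combined with mean-square bounds on $|\zeta|^{2k}$ should then reduce the problem to the near-diagonal regime $|s|\ll(\log T)^\eta$.

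In that regime I would change variables to $t=(t_1+t_2)/2$, $s=t_1-t_2$ and apply Conjecture~\ref{shifted moments conjecture} to the $t$-average, replacing $\zeta(\tfrac12+it_1)^k\overline{\zeta(\tfrac12+it_2)^k}$ by the CFKRS contour integral in $k$ variables with poles at $0$ and $-s$. Deforming the contours, this integral decomposes into $2^k$ pieces indexed by which of the $k$ variables encircle $-s$ versus $0$; by the symmetry of the Vandermonde factor, they collect into a sum over $0\le r\le k$ with multiplicity $\binom{k}{r}$. Performing the remaining $s$-integration against $K$ and rescaling according to $T=X^{1/\alpha}$ converts each such piece into $P'_{r,k}(\alpha)/k^2!$; the cutoff $r<\alpha$ emerges from the compatibility of the contour deformations with the position of the poles, and the appearance of a derivative (rather than $P_{r,k}$ itself) reflects the fact that the normalisation of $\mathcal V_k$ carries one fewer logarithm than that of $\mathcal I_k$.

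For the second equality, $\frac{1}{k^2!}\sum_{0\leq r<\alpha}\binom{k}{r}P'_{r,k}(\alpha)=\gamma_k(\alpha)$, the argument is a self-contained contour calculation. Starting from the matrix-integral definition of $\gamma_k$, I would represent $\delta(w_1+\cdots+w_k-\alpha)$ as a contour integral, deform each $\int_0^1 dw_i$ into a pair of residues at $w_i=0$ and $w_i=-1$, and collect terms by the count $r$ of variables evaluated at $-1$; the symmetry of the Vandermonde then produces the binomial weighting and matches the definition of $P_{r,k}$. The principal technical obstacle lies in the analytic piece: proving rigorously, without invoking Conjecture~\ref{shifted moments conjecture} outside its assumed range $Y\ll(\log T)^\eta$, that the contribution of $|t_1-t_2|\gg(\log T)^\eta$ is a lower-order error. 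This requires exploiting the decay of $K$ in the shift (via integration by parts in $x$, using $(x+H)^s-x^s=s\int_x^{x+H}u^{s-1}\,du$) together with dyadic and mean-value arguments for $|\zeta|^{2k}$, and is the heart of the argument.
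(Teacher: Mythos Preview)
Your analytic reduction of $\mathcal V_k(X,H)$ follows essentially the same route as the paper: Perron on the critical line, localisation of the $t$-integral to $|t|\asymp X/H$, restriction to the near-diagonal $|s|\ll(\log T)^\eta$, insertion of Conjecture~\ref{shifted moments conjecture}, and the $2^k$-fold contour splitting collected by $\binom{k}{r}$. One technical device you omit is that the paper first replaces the sharp average over $[X,2X]$ by a smooth weight $g$ (removing it only at the very end by approximation); this is what gives the rapid decay of $\hat g(1+is)$ needed to control the off-diagonal. With a sharp cutoff your kernel $K(t_1,t_2)$ decays only like $|t_1-t_2|^{-1}$ per integration by parts in $x$, which is not enough on its own. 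The paper also integrates by parts in the $t$-variable (not $x$), using Corollary~\ref{corcon} as the antiderivative bound; you should expect to need the same combination.

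Where your proposal genuinely diverges from the paper is the unconditional identity $\frac{1}{k^2!}\sum_{r<\alpha}\binom{k}{r}P'_{r,k}(\alpha)=\gamma_k(\alpha)$. The paper does \emph{not} prove this by direct contour manipulation of the $\delta$-integral defining $\gamma_k$; instead it passes through random matrix theory, computing the truncated secular-coefficient integral $\tilde I_k(n,N)$ in two ways (once via the KRRR asymptotic~\eqref{ffik}, once via the CFKRS formula for $\int_{U(N)}\Lambda_A(z_1)^k\Lambda_{A^*}(z_2)^k\,dA$) and reading off Corollary~\ref{cff}. Your sketch---``represent $\delta$ as a contour integral, deform each $\int_0^1 dw_i$ into a pair of residues at $w_i=0$ and $w_i=-1$''---has a gap as written: the integrand of $\gamma_k$ is a polynomial in the $w_i$ and has no poles, so there are no residues to collect, and the interval endpoints are $0$ and $1$, not $0$ and $-1$. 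The poles at $0$ and $-1$ in $P_{r,k}$ arise only after one has introduced the factors $w_i^{-k}(w_i+1)^{-k}$, and you have not explained what transformation of the $\gamma_k$ integral produces them. A direct proof along your lines may well exist, but it is not the one-line deformation you describe; the paper's RMT detour is there precisely because this step is not immediate.
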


Notice that since Conjecture~\ref{shifted moments conjecture} is known for $k\leq2$ with the desired uniformity, then the above theorem gives also an alternative unconditional proof of the case $k=2$.

 \medskip
 
 Much of what described above can also be similarly stated for the divisor function in arithmetic progressions. In this case, one defines 
\begin{align*}
\mathcal W_k(X;q):=\frac1X\sum_{1\leq a\leq q\atop (a,q)=1}\bigg|\sum_{n\leq X\atop n\equiv a\, (\textnormal{mod }q)}d_k(n)-\frac1{\varphi(q)}\operatorname{Res}_{s=1}\Big(\zeta_q^{k}(s)\frac{X^s}s\Big)\bigg|^2, \qquad q\in\N, x>0,
\end{align*} 
where $\varphi$ is Euler's $\varphi$-function and $\zeta_q(s):=\sum_{n\geq1, (n,q)=1}d_k(n)n^{-s}$. Then, writing $a_k(q):=\lim_{s\to1}(s-1)^{k^2}\sum_{(n,q)=1}\frac{d_k(n)^2}{n^s}$ (notice that $a_k(q)\to a_k$ if $q\to\infty$ among primes), we define 
 \begin{eqnarray*} 
\mathcal D_k(\alpha):=\lim_{q\to \infty,\ q\textnormal{ prime}\atop X\asymp q^{\alpha}}\frac{\mathcal W_k(X,q)}{a_k(q)  (\log q)^{k^2-1}},\qquad \alpha>0,
\end{eqnarray*}
where we added the assumption of the primality of $q$ for simplicity.

In~\cite{KRRR} the authors consider the function field analogue of this problem and are lead to conjecture that $\mathcal D_k(\alpha)$ also coincides with $\gamma_k(\alpha)$.

\begin{conjecture}\label{conjKR3_2}
For $k\geq1,\alpha>0$ we have $\mathcal D_k(\alpha)=\gamma_k(\alpha)$.
\end{conjecture}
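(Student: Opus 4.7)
The plan is to mirror the proof of Theorem~\ref{scd}, replacing the continuous $t$-average used for $\mathcal V_k(X,H)$ with the discrete average over Dirichlet characters $\chi\bmod q$. Since $q$ is taken to be prime, all non-principal characters are primitive, which removes any complication coming from imprimitive inducing.

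First, I would use orthogonality of Dirichlet characters to rewrite the error term inside $\mathcal W_k(X;q)$. Writing
\[
\sum_{\substack{n\leq X\\ n\equiv a\,(q)}} d_k(n) = \frac{1}{\varphi(q)}\sum_{\chi\bmod q}\bar\chi(a)\sum_{n\leq X}\chi(n)d_k(n),
\]
the principal character $\chi_0$ contributes precisely the residue main term being subtracted. A second application of orthogonality in the outer sum over $a$ then collapses $\mathcal W_k(X;q)$ into a character mean square:
\[
\mathcal W_k(X;q) = \frac{1}{X\,\varphi(q)}\sum_{\chi\neq\chi_0}\bigg|\sum_{n\leq X}\chi(n)d_k(n)\bigg|^2 + o\bigl(a_k(q)(\log q)^{k^2-1}\bigr).
\]

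Next, I would use a smoothed Perron-type formula to express $\sum_{n\leq X}\chi(n)d_k(n)$ as a contour integral of $L(s,\chi)^k X^s$ weighted by an entire test function, and shift onto the critical line $\Re s = \tfrac12$ (unobstructed since $L(s,\chi)$ is entire for $\chi\neq\chi_0$). Expanding the square converts the problem into a double integral in shift variables $t_1,t_2$, whose kernel concentrates on $|t_1-t_2|\ll(\log q)^{O(1)}$, weighted by $(\varphi(q))^{-1}\sum_\chi L(\tfrac12+it_1,\chi)^k L(\tfrac12-it_2,\bar\chi)^k$. At this point I would invoke the character analog of Conjecture~\ref{shifted moments conjecture}, with $q$ in place of $T$; the resulting main term is structurally identical to the multiple contour integral appearing in~\eqref{smce}.

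The final step is to carry out the $t_1,t_2$ integration. This calculation is formally the same as the one behind Theorem~\ref{scd}, with the parameter $T$ of the short-interval case replaced by $q$ and the size parameter identified with $\alpha=\log X/\log q$. The outcome is
\[
\mathcal D_k(\alpha) = \frac{1}{k^2!}\sum_{0\leq r<\alpha}\binom{k}{r}P'_{r,k}(\alpha),
\]
and this right-hand side equals $\gamma_k(\alpha)$ by the closing identity of Theorem~\ref{scd}.

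The main obstacle is that Conjecture~\ref{shifted moments conjecture} is formulated for $\zeta(\tfrac12\pm it)$ and $t$-averages, whereas what is genuinely needed here is the parallel statement for $L(s,\chi)$ averaged over $\chi\bmod q$. One should either postulate this character analog (known for $k=1$ and $k=2$ via the existing second and fourth moment results in the $q$-aspect) or deduce it from the $\zeta$-version through an approximate functional equation argument, exploiting that a primitive character modulo $q$ has analytic conductor $\asymp q$. Only mild uniformity in the shifts is needed, since the dominant contribution in the $t_1,t_2$ integrals comes from $|t_1-t_2|\ll(\log q)^{O(1)}$, well within the $Y\ll(\log T)^\eta$ hypothesis already used in Theorem~\ref{scd}.
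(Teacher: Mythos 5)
Your overall strategy---orthogonality over $\chi\bmod q$, Perron's formula, the $q$-aspect character analogue of the shifted moments conjecture, and then the same contour manipulations as in the $\zeta$ case---is exactly the route the paper takes in Theorem~\ref{tt3}, which proves Conjecture~\ref{conjKR3_2} assuming Conjecture~\ref{shifted moments conjecture dirichlet}; the latter is precisely the "character analog" you say must be postulated (a weak form of the Conrey--Iwaniec--Soundararajan conjecture). One caveat: deducing that analogue from the $t$-aspect conjecture for $\zeta$ via an approximate functional equation is not realistic; the $q$-aspect average over characters is an independent statement and the paper simply postulates it.

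The genuine gap is your closing claim that only $Y\ll(\log q)^\eta$ uniformity in the shifts is needed. That concentration mechanism is available in Theorem~\ref{scd} because the average over $x$ produces a factor $\hat g(1+iz_1-iz_2)$ whose decay forces $|z_1-z_2|\ll(\log X)^\eta$; here the average is over $\chi\bmod q$ and produces no such factor. With the sharp cutoff $n\leq X$ in $\mathcal W_k(X;q)$, the Perron contour must be taken up to height $T\asymp X^{1+\eps/(2k)}$, which is as large as $q^{k+\eps}$ when $\alpha$ approaches $k$, and both shift variables range over the whole contour: you need the moment conjecture uniformly for $|\Im(s)|,|\Im(z)|\ll q^{k+\eps}$ even just to bound the off-diagonal contribution, since the concentration on $|z_1-z_2|\ll1$ only emerges \emph{after} the conjecture has been applied there. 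This is exactly why Theorem~\ref{tt3} assumes $Y=q^{k+\eps}$; the relaxed uniformity $Y\ll(\log q)^\eta$ applies only to a smoothed variant of $\mathcal W_k$, and passing from the smoothed to the sharp cutoff is itself a nontrivial short-interval problem you would then have to control. Two smaller points: the principal character should be kept (or estimated separately), since its contribution is the error term in the $k$-divisor problem and is not trivially negligible for large $\alpha$; and the range $\alpha\geq k$ needs a separate (easy) argument, obtained by shifting the contour to $\Re(z)=-\tfrac14$.
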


The case of $k=2$ was obtained in~\cite{LZ} (see also~\cite{Mot,Blo}) and in general partial results are known if $\alpha>k-\frac12$~\cite{KR}. Recently Rodgers and Soundararajan~\cite{RS} proved a smoothed version of Conjecture~\ref{conjKR3_2} when one introduces also an average over $q\asymp Q$ provided that $0<\alpha<1+2/k$. Under the Generalized Lindel\"of Hypothesis they can also extend their result to $0<\alpha<2$ (see also~\cite{HS} for an unconditional lower bound on this range).
See also~\cite{dF}, where the authors consider a sum similar to $\mathcal W_k(X;q)$, in which the major arc contribution, rather than the main term,  is subtracted.

Also in this case Conjecture~\ref{conjKR3_2} can be proved under a moments conjecture, with the difference that now the moments needed are those of Dirichlet $L$-functions.

\begin{conjecture}\label{shifted moments conjecture dirichlet}
Let $q$ be prime. Then as $q\to\infty$ one has 
\begin{equation}\label{smce}
\begin{split}
&\frac1{\varphi(q)}\sum_{\chi \mod q}L(\tfrac12+s-z,\chi)^{k}L(\tfrac12+z,\overline \chi)^{k}\\[-0.5em]
&\hspace{7em}= \frac{a_k}{k!(2\pi i)^k}\iint_{R^k}q^{\sum_{j=1}^kw_j}\prod_{i=1}^kw_i^{-k}(w_i+s)^{-k}\prod_{i\neq j}(w_i-w_j)  \boldsymbol {dw} \\
& \hspace{7em}\quad+O_k\big((\log q)^{k^2-1+\eps}(1+\min(|s|,1)\log q)^{-k^2/2+1}\big)
\end{split}
\end{equation}
for $|\Re(z)|,|\Re(s)|\leq3/\log q$, $|\Im (z)|,|\Im (s)|\ll Y
$ and $R$ any piece-wise smooth path encircling once $0$ and $-s$  in the positive direction.
\end{conjecture}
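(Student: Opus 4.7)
The strategy is to derive~\eqref{smce} by applying the CFKRS ``recipe'' to the family $\{L(s,\chi):\chi\bmod q\}$, in direct analogy with the $t$-aspect recipe for $\zeta$ that underlies Conjecture~\ref{shifted moments conjecture}. First, write each of the $k$ factors of $L(\cdot,\chi)$ and each of the $k$ factors of $L(\cdot,\bar\chi)$ appearing on the left-hand side via an approximate functional equation of length $q^{k/2}$. Expanding the $2k$ resulting two-term sums produces $2^{2k}$ configurations, indexed by swap/no-swap choices. A swap of an $L(\cdot,\chi)$ factor converts it to a sum against $\bar\chi$ with a Gauss-sum weight, so after averaging over $\chi$ via orthogonality only the ``balanced'' configurations (in which the number of $L(\cdot,\chi)$ factors swapped equals the number of $L(\cdot,\bar\chi)$ factors swapped) survive; there are $\sum_{j=0}^k\binom{k}{j}^2=\binom{2k}{k}$ of them, matching the combinatorics that appears in the integral on the right of~\eqref{smce}.

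For each balanced configuration, I would apply the orthogonality relation $\varphi(q)^{-1}\sum_\chi \chi(n)\bar\chi(m)=\mathbbm 1_{n\equiv m\,(q),\,(nm,q)=1}$ to reduce the problem to sums of shape $\sum_{n\equiv m\,(q),\,(nm,q)=1} d_k(n)d_k(m)\,W(n/q^{k/2},m/q^{k/2})/(n^{\sigma_1}m^{\sigma_2})$, for smooth test functions $W$ arising from the cut-offs in the functional equation. The diagonal $n=m$ is controlled by the Euler-product computation of $\sum_{(n,q)=1}d_k(n)^2/n^{1+s}$ near $s=0$, which supplies the arithmetic factor $a_k$; a standard Mellin--Barnes repackaging, in which a Vandermonde arises from the sum over the $\binom{2k}{k}$ configurations and the poles $w_i^{-k}(w_i+s)^{-k}$ encode the shift pattern, should then reassemble the contributions into the $k$-fold contour integral on the right-hand side of~\eqref{smce}. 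The uniformity for $|\Im s|,|\Im z|\ll Y$ would be tracked through the holomorphic dependence of the approximate functional-equation kernel on the shift parameters.

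The principal obstacle is two-fold. First, the off-diagonal part of the diagonal configuration ($n\neq m$ but $n\equiv m\pmod q$) is precisely the $k$-fold divisor shifted convolution problem in arithmetic progressions modulo~$q$: trivial for $k=1$, feasible for $k=2$ by spectral methods \`a la Blomer--Mili\'cevi\'c, Hough, and Young (so that the conjecture is essentially known in those cases, explaining the remark after Theorem~\ref{scd}), but completely open for $k\geq 3$. Second, one must rigorously justify the cancellation of the $2^{2k}-\binom{2k}{k}$ unbalanced configurations; heuristically this follows from oscillation of the Gauss-sum weights, but a proof uniform in the shifts would require estimates for sums of products of Gauss sums sharper than what follows from Weil's bound. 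For these reasons I would expect~\eqref{smce} in its full strength for $k\geq 3$ to lie beyond current technology, which is precisely why the statement is presented as a conjecture, to be used in the same conditional spirit as Conjecture~\ref{shifted moments conjecture} is used in Theorems~\ref{t1} and~\ref{scd}.
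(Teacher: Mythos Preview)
The statement you are addressing is a \emph{conjecture}, not a theorem; the paper does not prove it and offers no argument beyond the remark that it is a weaker form (leading term only) of the conjecture in~\cite{CIS}. It is introduced solely as a hypothesis to be assumed in Theorem~\ref{tt3}, in the same way Conjecture~\ref{shifted moments conjecture} is assumed in Theorems~\ref{t1} and~\ref{scd}. So there is no ``paper's own proof'' to compare against.

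Your proposal is not a proof either, but a heuristic derivation via the CFKRS recipe, and you correctly recognise this in your final paragraph: the off-diagonal shifted-convolution contributions are out of reach for $k\ge 3$, and the cancellation of unbalanced swap configurations is a heuristic step, not a theorem. That self-assessment is accurate and matches the paper's stance. One small correction: in the recipe the unbalanced configurations are not discarded because of Gauss-sum oscillation; rather, the recipe simply retains only the diagonal after orthogonality and then swaps shifts according to the functional equation, so ``unbalanced'' terms never arise in the first place. But since the recipe is itself heuristic, this distinction does not affect your overall conclusion that the statement is, and must remain for now, a conjecture.
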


Notice that this conjecture is a weaker form of~\cite[Conjecture]{CIS}, with only the leading term being kept.

\begin{theorem}\label{tt3}
Let $k\geq1$. Assume Conjecture~\ref{shifted moments conjecture dirichlet} for $Y=q^{k+\eps}$ for some $\eps>0$. Then, Conjecture~\ref{conjKR3_2} holds.
\end{theorem}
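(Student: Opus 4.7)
The plan is to imitate the derivation of Theorem~\ref{scd} from Conjecture~\ref{shifted moments conjecture}, with orthogonality of Dirichlet characters replacing the $t$-average. For $q$ prime and $(a,q)=1$, orthogonality of characters mod $q$ yields
\begin{equation*}
\sum_{n\le X,\,n\equiv a\,(\textnormal{mod }q)} d_k(n) = \frac{1}{\varphi(q)}\sum_{\chi \bmod q}\overline{\chi(a)}\, M_\chi(X),
\end{equation*}
where $M_\chi(X):=\sum_{n\le X}\chi(n)d_k(n)$. Absorbing the residue main term $\varphi(q)^{-1}\operatorname{Res}_{s=1}(\zeta_q^k(s)X^s/s)$ into the $\chi_0$ summand and applying orthogonality once more after squaring collapses $\mathcal W_k(X;q)$ to
\begin{equation*}
\mathcal W_k(X;q) = \frac{1}{X\varphi(q)}\sum_{\chi \bmod q}|E_\chi(X)|^2,
\end{equation*}
where $E_\chi(X):=M_\chi(X)$ for $\chi\ne\chi_0$ and $E_{\chi_0}(X):=M_{\chi_0}(X)-\operatorname{Res}_{s=1}(\zeta_q^k(s)X^s/s)$. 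Perron's formula, shifted to the critical line and truncated at height $T_0\asymp X^{1+\eps}$, gives
\begin{equation*}
E_\chi(X) = \frac{\sqrt X}{2\pi}\int_{-T_0}^{T_0} L(\tfrac12+it,\chi)^k\,\frac{X^{it}}{\tfrac12+it}\,dt + (\textnormal{negligible}),
\end{equation*}
the remainder absorbing the horizontal contour segments, the tails $|t|>T_0$, and the simple pole at $s=0$ from odd characters.

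Squaring and interchanging sums with integrals produces
\begin{equation*}
\frac{1}{\varphi(q)}\sum_\chi |E_\chi(X)|^2 = \frac{X}{4\pi^2}\iint_{[-T_0,T_0]^2} \frac{X^{i(t_1-t_2)}\,\Phi_q(t_1,t_2)}{(\tfrac12+it_1)(\tfrac12-it_2)}\,dt_1\,dt_2,
\end{equation*}
with $\Phi_q(t_1,t_2):=\varphi(q)^{-1}\sum_\chi L(\tfrac12+it_1,\chi)^k L(\tfrac12-it_2,\overline\chi)^k$. This is precisely the average computed by Conjecture~\ref{shifted moments conjecture dirichlet}, under the identification $z=-it_2$, $s=i(t_1-t_2)$; the hypothesis $Y=q^{k+\eps}$ makes the conjecture uniformly valid over the box $|t_j|\le T_0$ in the range $\alpha\le k$ (i.e.\ $X\le q^{k+\eps}$). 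Substituting the conjectured $k$-fold contour integral for $\Phi_q$ and evaluating the two $t_j$-integrals by residues---a computation identical in structure to the corresponding step in the proof of Theorem~\ref{scd}, with $X^{1/\alpha}$ replaced by $q$---collapses everything to a $k$-fold integral in $\boldsymbol{w}$. Normalising by $a_k(q)(\log q)^{k^2-1}$ and letting $q\to\infty$, the contour manipulations of Section~\ref{rmt}, or equivalently the $P_{r,k}$-calculus behind Conjecture~\ref{smoothnes_conjecture}, identify the limit with $\tfrac{1}{k^2!}\sum_{0\le r<\alpha}\binom{k}{r}P'_{r,k}(\alpha)=\gamma_k(\alpha)$.

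The main obstacle is uniformity in the shifts: because the cutoff $n\le X$ is sharp, the $t_j$-integrals cannot be truncated below height $X^{1+\eps}$, so for $X$ up to $q^k$ the conjecture must be valid with shifts as large as $q^{k+\eps}$---exactly the hypothesis of the theorem. The complementary range $\alpha>k$ (where $\gamma_k(\alpha)=0$) requires a separate, standard upper bound argument combining Conjecture~\ref{shifted moments conjecture dirichlet} in the window $|t_j|\le q^{k+\eps}$ with convexity or large-sieve estimates beyond. The remaining technical checks---that the tails $|t|>T_0$, the odd-character pole contributions at $s=0$, and the $\chi_0$ error all contribute $o(a_k(q)(\log q)^{k^2-1})$---are routine second-moment estimates in the style of those already needed for Theorem~\ref{scd}.
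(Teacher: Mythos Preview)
Your approach is essentially the paper's own: orthogonality to pass to $\varphi(q)^{-1}\sum_\chi |I_\chi(X)|^2$, a truncated Perron integral for $I_\chi$, insertion of Conjecture~\ref{shifted moments conjecture dirichlet} into the resulting double integral, and then the contour manipulations leading to $\gamma_k(\alpha)$. Two small points of divergence are worth noting. First, the paper explicitly tracks the computation back to the proof of Theorem~\ref{t1} rather than Theorem~\ref{scd}: structurally the present situation is a direct analogue of the long-Dirichlet-polynomial setup (a single Perron kernel $X^{z}/(\tfrac12+z)$, squared), not the short-interval setup with its auxiliary factor $F(z,s,u)$ and the restriction to $|z|\asymp X/H$. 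The paper also isolates the reason one lands on $\gamma_k$ rather than its primitive $\mathcal M_k$: the kernel here is $\frac{-1}{(\frac12+z_1)(\frac12-z_2)}$ in place of $\frac{1}{z_1z_2}$, so that after the change of variables one has $\frac{1}{1+z}$ instead of $\frac{1}{z}$ in the analogue of~\eqref{ifac}, which drops the order of the pole by one. Second, for $\alpha>k$ the paper does something simpler than what you sketch: rather than splicing the moment conjecture with large-sieve or convexity input, it just moves the Perron contour to $\Re(z)=-\tfrac14$ and bounds trivially, obtaining $X^{-1/2}I_\chi(X)=o(1)$ for every $\chi$ individually; no second-moment argument is needed in this range.
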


Also in this case the uniformity with respect to the shift can be relaxed to $O((\log q)^\eta)$ for some fixed $\eta>0$ if one considers the analogue of $\mathcal W_k(X;q)$ with a smooth cut-off.


Conjecture~\ref{conjKR3_2} have been generalized to arithmetic functions associated to other $L$-functions in~\cite{HKR}. Also, in~\cite{CR} a variant of the (shifted) quadratic analogue of Conjecture~\ref{formk} is considered. 

We conclude the introduction by mentioning that the use of the moments conjecture in Theorems~\ref{t1},~\ref{scd} and~\ref{tt3} is reminiscent of the work~\cite{CS} where the authors use the ratio conjectures to deduce several results in number theory. Also, the study of problems analogous to  those treated here, with $d_k(n)$ replaced by the von Mangoldt function $\Lambda(n)$, has a rich history. See for example~\cite{GGM} and references therein.

\section*{Acknowledgment}
The authors wish to thank Sary Drappeau, Brad Rodgers and the anonymous referee for useful comments.
S. Bettin is member of the INdAM group GNAMPA and his work is partially supported by PRIN 2017 ``Geometric, algebraic and analytic methods in arithmetic''. JBC is supported in part by a grant from the NSF.

\section{The proofs of the theorems}
We start by showing that Conjecture~\ref{smoothnes_conjecture} holds for $k=2$.
\begin{proposition}\label{k2}
Conjecture~\ref{smoothnes_conjecture} holds for $k=2$.
\end{proposition}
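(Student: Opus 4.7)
The plan has two parts: (i) establish the piece-wise polynomial formula~\eqref{m2conjecture} for $\mathcal M_2(\alpha)$ by classical mean-value techniques, and (ii) verify algebraically that the closed form $\frac{1}{4!}\sum_{0\le r<\alpha}\binom{2}{r}P_{r,2}(\alpha)$ of Conjecture~\ref{smoothnes_conjecture} reproduces the same piece-wise polynomial.

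For (i), I would open up the square in $\mathcal I_2(T,N)$ and split the resulting mean value into three contributions: the diagonal $\sum_{n\le N} d_2(n)^2/n$, evaluated via Perron summation applied to the Ramanujan identity $\zeta(s)^4/\zeta(2s)=\sum_n d_2(n)^2 n^{-s}$; the interaction of the Dirichlet polynomial with the polar contribution, handled by contour shifting; and the off-diagonal $\sum_{m\ne n}d_2(m)d_2(n)(mn)^{-1/2}\int_0^T(n/m)^{it}\,dt$, whose evaluation ultimately amounts to the Atkinson--Motohashi-type additive divisor asymptotic. For $0<\alpha\le1$ only the diagonal survives, giving $\alpha^4/4!$; for $1<\alpha\le2$ the three pieces conspire to produce the middle polynomial; and for $\alpha>2$ the mean value $\mathcal I_2(T,T^\alpha)$ stabilises at Ingham's fourth-moment value $\tfrac{1}{2\pi^2}(\log T)^4$, yielding the constant $g_2=2/4!$.

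For (ii), writing $\prod_{i\ne j}(w_i-w_j)=-(w_1-w_2)^2$, I would extract by direct Taylor expansion the coefficient of $w_1 w_2$ in $(w_1+w_2+\alpha)^4(w_1-w_2)^2(1+w_1)^{-2}(1+w_2)^{-2}$ at the origin, obtaining $P_{0,2}(\alpha)=\alpha^4$; the involution $w_i\mapsto -1-w_i$ swaps the poles at $0$ and $-1$ and sends $\alpha$ to $2-\alpha$, whence $P_{2,2}(\alpha)=P_{0,2}(2-\alpha)=(2-\alpha)^4$; and for the mixed case the shift $w_1=-1+u$ reduces $P_{1,2}$ to a double residue at the origin whose Taylor expansion yields $P_{1,2}(\alpha)=-(\alpha-1)^4-6(\alpha-1)^2=-\alpha^4+4\alpha^3-12\alpha^2+16\alpha-7$. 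Assembling according to~\eqref{formk}, one obtains
\begin{align*}
\tfrac{1}{24}P_{0,2}(\alpha)&=\tfrac{\alpha^4}{24},\\
\tfrac{1}{24}\bigl(P_{0,2}(\alpha)+2P_{1,2}(\alpha)\bigr)&=\tfrac{-\alpha^4+8\alpha^3-24\alpha^2+32\alpha-14}{24},\\
\tfrac{1}{24}\bigl(P_{0,2}(\alpha)+2P_{1,2}(\alpha)+P_{2,2}(\alpha)\bigr)&=\tfrac{2}{24},
\end{align*}
which matches~\eqref{m2conjecture} in the ranges $(0,1]$, $(1,2]$, and $(2,\infty)$ respectively.

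The principal difficulty lies in step (i): the uniform asymptotic for the shifted divisor sum across the full range $\alpha\in(0,2)$, while classical, is the technically demanding input. Step (ii) is by contrast a short residue computation, considerably simplified by the involution $w\mapsto -1-w$ and the fact that $P_{0,2}$ evaluates to the monomial $\alpha^4$.
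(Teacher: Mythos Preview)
Your proposal is correct, and part (ii) is a clean explicit verification of the residue identities that the paper takes for granted (it simply asserts ``it suffices to show that~\eqref{m2conjecture} holds''). The substantive difference is in part (i), in the range $1<\alpha<2$. You propose to attack this range head-on via the binary additive divisor asymptotic (Atkinson--Motohashi), which certainly works but is the heavy part of the argument. The paper instead sidesteps this entirely: it invokes the approximate functional equation for $\zeta^2$ to obtain the reflection identity $\mathcal M_2(\alpha)+\mathcal M_2(2-\alpha)+E=g_2$, and then shows by an elementary stationary-phase type bound (as in Titchmarsh, p.~143) that the cross-term $E$ vanishes. Since $2-\alpha\in(0,1)$ lies in the range already settled by the diagonal (Montgomery--Vaughan), this immediately gives $\mathcal M_2(\alpha)=g_2-(2-\alpha)^4/24$, which is the middle polynomial. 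So the paper trades the additive divisor problem for a symmetry argument plus a much softer oscillatory estimate; your route is more direct and self-contained, but considerably longer to execute in full.
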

\begin{proof}
It suffices to show that~\eqref{m2conjecture} holds. Also, we can assume $T$ is a half-integer.

The case of $\alpha\leq1$ follows from~\cite[Corollary~3]{MV}, whereas the case $\alpha\geq2$ can be deduced from~\cite[(4.18.2)]{Tit} (with $y=O(1)$) and the fourth moment for the Riemann zeta-function. Finally, if $1<\alpha<2$, then the approximate functional equation~\cite[(4.18.2)]{Tit} for $\zeta^2$ implies $\mathcal M_2(2-\alpha)+\mathcal M_2(\alpha)+E=g_2$, where
\begin{align*}
E=\lim_{T\to \infty}\frac{2}{a_2  T(\log T)^{4}}\Re\int_0^T\chi(\tfrac12-it)^2 \sum_{1\leq n\leq T^\alpha} \frac{d(n)}{n^{1/2+it}}\sum_{1\leq m\leq t^2/(4\pi^2T^\alpha)} \frac{d(m)}{m^{1/2+it}}\,dt.
\end{align*}
The result then folloes since, similarly to~\cite[p. 143]{Tit}, one has that the inner integral is 
\begin{align*}
\ll \sum_{1\leq n\leq T^\alpha} \frac{d(n)}{n^{1/2}}\sum_{1\leq m\leq T^2/(4\pi^2T^\alpha)} \frac{d(m)}{m^{1/2}}\bigg(1+O\bigg(\frac1{\sqrt {nm}|\log(4\pi^2nm /T)|}\bigg)\bigg)\ll T(\log T)^2.
\end{align*}

\end{proof}

Next, we observe that Conjecture~\ref{shifted moments conjecture} implies the following bound.
\begin{corollary}\label{corcon}
Assume Conjecture~\ref{shifted moments conjecture}. Then, under the same hypothesis of the conjecture we have
\begin{align*}
\frac1T\int_{0}^T\zeta(\tfrac12+it+s-z)^{k}\zeta(\tfrac12-it+z)^{k}\,dt
\ll(\log T)^{k^2}(1+\min(|s|,1)\log T)^{-k^2/2},
\end{align*}
and
\begin{align*}
\frac1T\int_{0}^T|\zeta(\tfrac12+it)|^{2k}\,dt
\ll(\log T)^{k^2},
\end{align*}
where here and throughout this section the error terms are allowed to depend on $k$.
\end{corollary}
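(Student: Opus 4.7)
The plan is to invoke Conjecture~\ref{shifted moments conjecture} and then bound each of the two terms it produces (main term and error) against the target $(\log T)^{k^{2}}(1+u)^{-k^{2}/2}$, where I write $u:=\min(|s|,1)\log T$. The error in the conjecture is $(\log T)^{k^{2}-1+\eps}(1+u)^{-k^{2}/2+1}$, which is exactly $(\log T)^{-1+\eps}(1+u)$ times the target; since $u\le\log T$, this is $O((\log T)^{\eps})$ times the target, admissible (read the statement with an $\eps$-loss absorbed in the implied $k$-dependent constant, as the conjecture already carries $\eps$).

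The real work is estimating the main-term integral
\[
I \;:=\; \iint_{R^{k}} T^{\sum w_{j}}\prod_{i=1}^{k} w_{i}^{-k}(w_{i}+s)^{-k}\prod_{i\neq j}(w_{i}-w_{j})\,\boldsymbol{dw}.
\]
Because the integrand is meromorphic in each $w_{i}$ with poles only at $0$ and $-s$, I am free to deform $R$; I split on the size of $|s|$. \emph{Case (i):} $|s|\le 2/\log T$. Take $R$ to be a single circle centred at $-s/2$ of radius $3/\log T$, so that both $|w_{i}|$ and $|w_{i}+s|$ are $\asymp 1/\log T$, $|w_{i}-w_{j}|\ll 1/\log T$, and $|T^{\sum w_{j}}|=O(1)$ since $|\Re w_{j}|\ll 1/\log T$. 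Trivially bounding length times supremum gives
\[
|I|\;\ll\;(\log T)^{-k}\cdot(\log T)^{2k\cdot k}\cdot(\log T)^{-k(k-1)}\;=\;(\log T)^{k^{2}},
\]
which matches the target since here $1+u=O(1)$.

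\emph{Case (ii):} $|s|>2/\log T$. Deform $R$ to the disjoint union of two small circles $C_{0}$ and $C_{-s}$ of radius $1/\log T$; this decomposes $I$ into $2^{k}$ pieces $I_{S}$ indexed by the subset $S\subseteq\{1,\dots,k\}$ on which $w_{i}\in C_{0}$. For $|S|=r$, on each copy one has $|w_{i}|^{-k}|w_{i}+s|^{-k}\ll(\log T)^{k}|s|^{-k}$; the $r(r-1)+(k-r)(k-r-1)$ ordered pairs inside a single cluster contribute $(w_{i}-w_{j})$-factors of size $\ll 1/\log T$, while the $2r(k-r)$ cross pairs contribute factors of size $\ll|s|$; and $T^{\sum w_{j}}=O(1)$ again because $|\Re w_{j}|\ll 1/\log T$ on both circles (using $|\Re s|\le 3/\log T$). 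Multiplying all factors and the $(1/\log T)^{k}$ from the path lengths, a short computation collapses to
\[
|I_{S}|\;\ll\;(\log T)^{k^{2}}\bigl(|s|\log T\bigr)^{-r^{2}-(k-r)^{2}}.
\]
The convexity bound $r^{2}+(k-r)^{2}\ge k^{2}/2$ then places each $|I_{S}|$ below the target, and summing over the $2^{k}$ subsets proves the first estimate of the corollary.

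The second estimate is a specialisation: setting $s=z=0$ in the first, the left-hand side collapses to $\frac{1}{T}\int_{0}^{T}|\zeta(\tfrac12+it)|^{2k}\,dt$ and the target to $(\log T)^{k^{2}}$, matching precisely the Case (i) computation above. The principal obstacle is the bookkeeping in Case (ii): one must correctly count how many factors of $(w_{i}-w_{j})$ are of size $1/\log T$ versus $|s|$, in order to land exactly on the exponent $-r^{2}-(k-r)^{2}$ that balances against the singular factors $w_{i}^{-k}(w_{i}+s)^{-k}$ and produces the convex minimum $k^{2}/2$ uniformly in $S$.
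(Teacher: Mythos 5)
Your proof is correct and follows essentially the same route as the paper: a trivial bound on a single circle of radius $\asymp 1/\log T$ when $|s|$ is small, and otherwise a decomposition into $2^{k}$ integrals over circles centred at $0$ and $-s$, with the power count reducing to $r^{2}+(k-r)^{2}\ge k^{2}/2$. Your bound $(\log T)^{k^{2}}(|s|\log T)^{-r^{2}-(k-r)^{2}}$ agrees exactly with the paper's exponent $(\log T)^{k^{2}-k-r(r-1)-(k-r)(k-r-1)}|s|^{-k^{2}+2r(k-r)}$, and your explicit handling of the conjecture's error term (which the paper's proof leaves implicit) is a welcome extra.
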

\begin{proof}
If $|s|< 3/\log T$ it suffices to take the path $R$ in Conjecture~\ref{shifted moments conjecture} to be a circle centred at zero of radius $4/\log T$ and bound trivially the integrals on the right of~\eqref{smce}. If $|s|\geq 3/\log T$, by Cauchy's theorem we can write these integrals as a sum of $2^k$ integrals where each $w_i$ is integrated along a circle of radius $1/\log T$ and center either $0$ or $-s$. The contribution of the term where the center at $0$ is taken exactly $0\leq r\leq k$ times is 
$$\ll (\log T)^{k^2-k-r(r-1)-(k-r)(k-r-1)} \cdot |s|^{-k^2+2r(k-r)}.$$
The result then follows since the first factor has maximum $(\log T)^{k^2/2}$ whereas the second factor is $\ll \max(1,|s|^{-k^2/2})$. 
\end{proof}

\subsection{Proof of Theorem~\ref{t1}}
By Proposition~\ref{k2} we can assume $k\geq3$. If $\alpha>k$, one can directly relate $\mathcal I_k(T,T^{\alpha})$ with the $2k$-th moment of the Riemann zeta-function, and so the desired formula follows by Conjecture~\ref{shifted moments conjecture}. In particular we can assume $0<\alpha\leq k$. 
We assume Conjecture~\ref{shifted moments conjecture} holds with $Y=T':=T^{k/2+\eps}$ for some fixed $\eps>0$. Let $\eps'\leq \eps/3$ be a sufficiently small positive constant so that $3\eps'<\alpha\leq k$. Also, we assume $\alpha$ is not an integer and we write $N:=[T^\alpha]+\frac12$.

By~\cite[Lemma 3.12]{Tit} we have
\begin{align*}
\sum_{1\leq n\leq N} \frac{d_k(n)}{n^{1/2+it}}&=\frac1{2\pi i}\int_{\frac12+c-i T'}^{\frac12+c+iT'}\zeta(\tfrac12+it +z)^k \frac{N^{z}}{z}\,dz+O(T^{-\eps'})\\
&=\frac1{2\pi i}\int_{c-i T'}^{c+iT'}\zeta(\tfrac12+it +z)^k \frac{N^{z}}{z}\,dz+\operatorname{Res}_{z=1}\bigg(\frac{\zeta^{k}(z)N^{z-\frac12-it}}{z-\frac12-it}\bigg)+O(T^{-\eps'}),
 \end{align*}
 with $c=1/\log T$,
 since Corollary~\ref{corcon} implies in particular that $\zeta(\frac12+it)\ll_{k} 1+|t|^{(1+\eps')/(2k)+\eps'}$. By Cauchy-Schwartz, it  follows that 
 $$\mathcal I_k(T,N)= \mathcal I+O(\sqrt{\mathcal I}T^{-\eps'}+ T^{-2\eps'}),$$
where
\begin{align*}
\mathcal I&:=\frac1{4\pi^2T} \int_0^T\bigg| \int_{c-i T'}^{c+iT'}\zeta(\tfrac12+it +z)^k \frac{N^{z}}{z}\,dz\bigg|^2\,dt\\
&=\frac1{4\pi^2} \int_{c-i T'}^{c+iT'}\int_{-c-i T'}^{-c+iT'}\frac1T \int_0^T  \zeta(\tfrac12+it +z_1)^k \zeta(\tfrac12-it -z_2)^k\,dt\frac{N^{z_1-z_2}}{z_1z_2}\,dz_2 dz.
\end{align*}
We evaluate the inner integral using Conjecture~\ref{shifted moments conjecture}. With a quick computation we see that the contribution of the error term is 
\begin{align*}
&\ll_k (\log T)^{k^2-1} \int_{- T'}^{T'}\int_{- T'}^{T'} (1+\min(|z_2-z_1|,1)\log T)^{-k^2/2+1} \frac{d z_2 dz_1}{|c+iz_1| | c+iz_2|}\\
&\ll_k (\log T)^{k^2-1},
\end{align*}
provided that $k\geq3$. 
For the main term, we choose the path $R$ to be the border of the rectangle  $[-\frac{3}{\log T},\frac{3}{\log T}]+i[-3T,3T]$ (in the positive direction). Exchanging the order of integration, we obtain that the contribution of the main term is 
\begin{align}
\frac{a_k}{(2\pi i)^{k}}   \iint_{R^n}&T^{\sum_{j=1}^kw_j} \prod_{i=1}^kw_i^{-k}  \prod_{i\neq j}(w_i-w_j)\notag\\
&\times \bigg( \frac{-1}{(2\pi i)^2}\int^{c+iT'}_{c-iT'}\int^{-c+iT'}_{-c+iT'} \prod_{i=1}^k(w_i+z_1-z_2)^{-k}
\frac{N^{z_1-z_2}}{z_1z_2}dz_2 dz_1\bigg)\boldsymbol {dw}.\label{mainterm1}
\end{align}
We now consider the two inner integrals. We have
\begin{align*}
&\frac{-1}{(2\pi i )^2}\int^{c+iT'}_{c-iT'}\int^{-c+iT'}_{-c+iT'} \prod_{i=1}^k(w_i+z_1-z_2)^{-k}
\frac{N^{z_1-z_2}}{z_1z_2}dz_2 dz_1\\
&\hspace{6em}=\frac{-1}{2\pi i }\int^{2c+2iT'}_{2c-2iT'}N^{z}\prod_{i=1}^k(w_i+z)^{-k} \frac1{2\pi i }\int_{-c-iT'+i\max (-z,0)}^{-c+iT'+i\min(-z,0)} 
\frac{1}{(z+z_2)z_2}dz_2 dz\\
&\hspace{6em}=\frac1{2\pi i }\int^{2c+2iT'}_{2c-2iT'}N^{z}\bigg(\frac1{z}+O(T'^{-1}\log T)\bigg)\prod_{i=1}^k(w_i+z)^{-k}  dz_1,
\end{align*}
where for the last equality we used that the inner integral is $-\frac1{z}+O(T'^{-1} \log T)$. This can be proved by extending and evaluating the integral if $|\Im(z)|<T'/2$, whereas for  $|\Im(z)|>T'/2$ one has that the integral is $O(T'^{-1}\log T)=-\frac1{z}+O(T'^{-1} \log T)$ by a trivial bound. We insert this approximation in~\eqref{mainterm1}. The contribution of the error term can then be bounded by $O(T'^{-1+\eps})$, and so we obtain that, up to a $O( (\log T)^{k^2-1})$ error, $\mathcal I$ is
\begin{align*}
\frac{a_k}{(2\pi i)^k}   \iint_{R^k}T^{\sum_{j=1}^kw_j}\prod_{i\neq j}(w_i-w_j)\bigg(\frac1{2\pi i}  \int^{2c+iT'}_{2c-iT'} \prod_{i=1}^kw_i^{-k}(w_i+z)^{-k}
\frac{N^{z}}{z} dz\bigg) \boldsymbol {dw}.
\end{align*}
Since $|z|\geq2/\log T$, by Cauchy's theorem we can split each outer integral as a sum of two integrals along two circular paths (in the positive direction) of radii $1/{\log T}$ and centers $-\delta_iz$, with $\delta_i=0,1$. Therefore, 
\begin{align*}
\mathcal I&=a_k\sum_{\bm \delta\in\{0,1\}^k}\mathcal {I}_{\bm{\delta}}
+O\big( (\log T)^{k^2-1}\big),
\end{align*}
where
\begin{align*}
\mathcal {I}_{\bm{\delta}}:=\frac{1}{(2\pi i)^k}   \iint_{|w_i|=\frac1{\log T}} \bigg(\frac1{2\pi i}\int^{2c+iT'}_{2c-iT'}&T^{\sum_{j=1}^k(w_j-\delta_jz)} \prod_{i=1}^k(w_i-z\delta_i)^{-k}(w_i-z\delta_i+z)^{-k}\\
&\times 
 \prod_{i\neq j}(w_i-w_j-\delta_i z+\delta_j z)\,\frac{N^{z}}{z} dz\bigg)\boldsymbol {dw}.
\end{align*}
If $\alpha-\sum_{j=1}^{k}\delta_j<0$ (and $T$ is sufficiently large) we have $\mathcal {I}_{\bm{\delta}}=O(T'^{-1+\eps})$ as can be seen by moving the line of integration of the inner integral to $+\infty$ bounding trivially the contribution of the horizontal lines. In particular, 
\begin{align*}
\mathcal I&=a_k\sum_{\bm \delta\in\{0,1\}^k\atop \sum_{j=1}^{k}\delta_j<\alpha}\mathcal {I}_{\bm{\delta}}
+O\big( (\log T)^{k^2-1}\big).
\end{align*}
If $\alpha-\sum_{j=1}^{k}\delta_j>0$ then we move the line of integration in $\mathcal {I}_{\bm{\delta}}$ to $-\infty$ and see that the inner integral is the sum of the residue at $z=0$, $z=w_i$ or $z=-w_i$ for each $i$, up to a $O(T'^{-1+\eps})$ error term. Thus, in the latter case, up to a small error, we can replace the inner integral by an integral over the circle $|z|=2/\log T$. For  $\alpha-\sum_{j=1}^{k}\delta_j>0$  we then obtain
\begin{align}
\mathcal {I}_{\bm{\delta}}=\frac{1}{(2\pi i)^k} \iint_{|w_i|=\frac1{\log T}}\bigg(\frac1{2\pi i}\int_{|z|=\frac2{\log T}}  & T^{\sum_{j=1}^k(w_j-\delta_jz)}  \prod_{i=1}^k(w_i-z\delta_i)^{-k}(w_i-z\delta_i+z)^{-k}\notag \\
&\times
 \prod_{i\neq j}(w_i-w_j-\delta_i z+\delta_j z)\frac{N^{z}}{z}\, dz\bigg)\boldsymbol {dw}+O(T'^{-1+\eps}).\label{ifac}
\end{align}
Making the change of variables $w_i\to zw_i$ for each $i$ (exchanging twice the order of integration),
we then see that this is
\begin{align*}
\frac{1}{(2\pi i)^{k}} \iint_{|w_i|=\frac12} &\bigg(\frac1{2\pi i}\int_{|z|=\frac2{\log T}}   T^{z (\alpha+\sum_{j=1}^k(w_j-\delta_j))} \,\frac{dz}{z^{1+k^2}} \bigg) \\
&\times \prod_{i=1}^k(w_i-\delta_i)^{-k}(w_i-\delta_i+1)^{-k}
\prod_{i\neq j}(w_i-w_j-\delta_i +\delta_j )\boldsymbol {dw}+O(T'^{-1+\eps}).
\end{align*}
By the residue theorem the result then follows. If $\alpha$ is an integer, it suffices to repeat the same argument with $N=T^{\alpha-\frac1T}$.

\subsection{Proof of Theorem~\ref{scd}}
Let $H=X^{1-1/\alpha}$ with $\alpha>0$. We assume $k\geq2$ otherwise the result is trivial. We shall prove that $\mathcal C_k(\alpha)=\frac{1}{k^2!}\sum_{0\leq r<\alpha}\binom{k}r P'_{r,k}(\alpha)$. By Corollary~\ref{cff} below it then follows that this is  also equal to $\gamma_k(\alpha)$.

Also, writing
\begin{align*}
\tilde V_{g,k}(X,H):=\frac1{HX}\int_{\R}g(x/X)|\Delta_k(x,H)|^2\,dx,
\end{align*} 
by a simple approximation argument on has that it suffices to prove that
\begin{eqnarray*} 
\lim_{X\to \infty}\frac{ \tilde V_{g,k}(X,X^{1- 1/\alpha})}{a_k  (\log X^{1/\alpha})^{k^2-1}}=\int_\R g(x)\,dx\cdot\frac{1}{k^2!}\sum_{0\leq r<\alpha}\binom{k}r P'_{r,k}(\alpha)
\end{eqnarray*}
for all smooth functions $g$ of compact support contained in $[\frac12,4]$.

Let $T=X^{1+\eps}/\sqrt H$. Then by~\cite[Lemma 3.12]{Tit} we obtain 
\begin{align*}
\mathcal V_k(X,H)&=\frac1{HX}\int_{X}^{2X}g(x/X)\bigg|\frac1{2\pi  }\int_{-T}^{T} \zeta^k(\tfrac12+iz)  \frac{(x+H)^{\frac12+iz}-x^{\frac12+iz}}{{\frac12+iz}}\,dz \bigg|^2\,dx+o(\log X)^{k^2-1}).
\end{align*}
We shall now show that the contribution from the $|z|\notin [\frac{X}{HZ}, \frac{XZ}{H}]$ is negligible, that is 
\begin{align}\label{bame}
\frac1{HX}\int_{X}^{2X}g(x/X)\bigg|\frac1{2\pi  }\int_{|z|\in [0, \frac{X}{HZ}]\cup[\frac{XZ}{H}, T]} \zeta^k(\tfrac12+iz)  \frac{(x+H)^{\frac12+iz}-x^{\frac12+iz}}{{\frac12+iz}}\,dz \bigg|^2\,dx=o(\log X)^{k^2-1})
\end{align}
where $Z>1$ is a parameter, to be specified later, which goes to infinity slowly. Proceeding as in~\cite[Page 1036]{MR1} (see also~\cite[Page 5]{MR2} and~\cite[Page 25]{SV}), we see that~\eqref{bame} follows if we can prove that
\begin{align*}
&\frac1{H}\int_\R g(x/X)\bigg|\frac1{2\pi i }\int_{|z|\in [0, \frac{X}{HZ}]\cup[\frac{XZ}{H}, T]} \zeta(\tfrac12+iz)^k x^{iz} \frac{(1+u)^{\frac12+iz}-1}{{\frac12+iz}}\,dz \bigg|^2\,dx=o(\log X)^{k^2-1})
\end{align*}
for all $u\ll H/X$. We divide the range of integration over $z$ as $E_1\cup E_2\cup E_3$ with $E_1:=[0,\frac{X}{HZ}]$,  $E_2:=[\frac{XZ}{H},\frac{XZ}{H}\log X]$ , $E_3:=[\frac{XZ}{H} \log X,T]$.

Expanding the square and denoting by $\hat g$ the Mellin transform of $g$, this is
\begin{align*}
&\frac X{H}\frac1{(2\pi)^2 }\iint_{|z_1|,|z_2| \in E_1\cup E_2\cup E_3} X^{iz_1-iz_2}  \hat g(1+iz_1-iz_2)\zeta(\tfrac12+iz_1)^k\zeta(\tfrac12-iz_2)^k\\
&\hspace{10em}\times \frac{(1+u)^{\frac12+iz_1}-1}{{\frac12+iz_1}}\frac{(1+u)^{\frac12+iz_2}-1}{{\frac12+iz_2}}\,dz_1dz_2\\
&=\frac X{H}\frac1{(2\pi)^2 }\iint_{|z|,|z-s| \in E_1\cup E_2\cup E_3}  X^{is}\hat g(1+is)\zeta(\tfrac12+iz)^k\zeta(\tfrac12-iz+is)^k\frac{F(z,s,u)}{(\frac12+iz)(\frac12-iz+is)}\,dzds,
\end{align*}
where
\begin{align*}
F(z,s,u)&:=\big((1+u)^{\frac12+iz}-1\big)\big((1+u)^{\frac12-iz+is}-1\big)\\
&=(1+u)^{1+is}+1-(1+u)^{\frac12-iz+is}-(1+u)^{\frac12+iz}.
\end{align*}
Now,
\begin{align*}
F(z,s,u)&\ll \min(|\tfrac12+iz| H/X ,1)\min(|\tfrac12-iz+is| H/X,1).\\
\end{align*}
Thus, using
\begin{align*}
\frac{\zeta(\tfrac12+iz)^k\zeta(\tfrac12-iz+is)^kF(z,s,u)}{(\frac12+iz)(\frac12-iz+is)}&\ll |\zeta(\tfrac12+iz)|^{2k}\min( (H/X)^2 ,|\tfrac12+iz|^{-2})+\\
&\quad+|\zeta(\tfrac12-iz+is)|^{2k}\min((H/X)^2,|\tfrac12-iz+is| ^{-2})
\end{align*}
and Corollary~\ref{corcon} and thanks to the fast decay of the Mellin transform $\hat g$, we can truncate the integral at $|s|\ll (\log X)^\eta$ at the cost of a $o(1)$ error. This implies in particular that we can remove the condition $|z-s|\in E_1\cup E_2\cup E_3$ at a negligible cost. We now consider the contribution from $|z|\in E_3$. By the above bound for $F$ we have that this range contributes
 \begin{align*}
&\ll \frac X{H}\frac1{(2\pi)^2 }\iint_{|s|\ll (\log X)^\eta,\atop |z|\in E_3}  |\hat g(1+is)\big(|\zeta(\tfrac12+iz)|^{2k}|\tfrac12+iz|^{-2}+|\zeta(\tfrac12-iz+is)^{2k}|\tfrac12-iz+is|^{-2}\big)\,dzds \\
&\ll (\log X)^{k^2-1}/Z.
\end{align*}
The same computation gives the bound $O((\log X)^{k^2-1}(\log \log X)^2/Z)$ for the range $|z|\in E_2$, $s<(\log X)^{-1}(\log\log X)^2$.
Also, since 
\begin{align*}
\partial_z F(z,s,u) &=   i \log(1+u)\big((1+u)^{\frac12-iz+is}-(1+u)^{\frac12+iz}\big)\\
&\ll \frac{H}X \min(1,|\tfrac12-iz+is| H/X+|\tfrac12+iz|H/X),
\end{align*}
then $\partial_z F(z,s,u) \ll H/X$ for $|z|\in E_2$. Thus, integrating by parts using Corollary~\ref{corcon} we than obtain that the contribution of $|z|\in E_2$, $(\log X)^{-1}(\log\log X)^2\leq |s|< (\log X)^\eta$, is 
\begin{align*}
&\ll ( \log X)^{k^2}\frac1{(2\pi)^2 }\iint_{\frac{Z X}{H} \leq |z| \leq \frac{Z X\log X}{H},\atop |s|\geq (\log X)^{-1}(\log\log X)^2} |\hat g(1+is)| (1+\min(1,|s|)\log X)^{-k^2/2}\frac{1}{|\frac12+iz|}\,dzds\\
&\ll ( \log X)^{k^2-1} (\log\log X)^{3-k^2}\ll ( \log X)^{k^2-1}/\log\log X.
\end{align*}
It remains to deal with $|z|\in E_1$. In this case $\partial_z F(z,s,u)\ll \frac{H^2}{X^2}|\frac12+iz|$ and thus integrating by parts once again we can bound this contribution by
\begin{align*}
&\ll ( \log X)^{k^2}\frac{H}{X}\frac1{(2\pi)^2 }\iint_{ s\ll (\log X)^\eta,\atop |z| \leq \frac{ X}{H Z}} |\hat g(1+is)| (1+\min(1,|s|)\log X)^{-k^2/2}\,dzds\\
&\ll ( \log X)^{k^2-1}/Z.
\end{align*}
This concludes the proof of~\eqref{bame} provided that $Z/(\log\log X)^2\to\infty$. We then have
\begin{align*}
\mathcal V_k(X,H)&=\frac1{HX}\int_{\R}g(x/X)\bigg|\frac1{2\pi  }\int_{\frac{X }{HZ}<|z|<\frac{X Z}{H}} \zeta^k(\tfrac12+iz)  \frac{(x+H)^{\frac12+iz}-x^{\frac12+iz}}{{\frac12+iz}}\,dz \bigg|^2\,dx+o(\log X)^{k^2-1})\\
&=\frac1{H}\frac1{(2\pi)^2 }\int_{\frac{X}{ZH}<|z|,|z-s|<\frac{ZX}{H}} \zeta^k(\tfrac12+iz)\zeta^k(\tfrac12-iz+is) J(z,s)\,dz ds+o((\log X)^{k^2}),
\end{align*}
where 
\begin{align*}
J(z,s)&:=\frac1X\int_{\R}g(x/X)\frac{(x+H)^{\frac12+iz}-x^{\frac12+iz}}{{\frac12+iz}}\frac{(x+H)^{\frac12-iz+is}-x^{\frac12-iz+is}}{{\frac12-iz+is}}\,dx\\
&=\frac1X\int_{\R}g(x/X)\frac{F(z,s,H/x)}{(\frac12+iz)(\frac12-iz+is)}x^{1+is}\,dx.
\end{align*}
Now, for $m\geq 0$ we have
\begin{align*}
\partial_x^m F(z,s,H/x)&\ll_m \frac{H^m}{x^{2m}}(1+|z|+|z-s|)^m,\\
\partial_x^m\partial_z F(z,s,H/x)&\ll_m  \frac{H^{m+1}}{x^{2m+1}}(1+|z|+|z-s|)^m. 
\end{align*}
Thus, integrating by parts $\ell\geq1$ times we have
\begin{align}
J(z,s)&\ll\frac {\frac{H^{\ell}}{X^{\ell-1}}(1+|z|+|z-s|)^\ell+X}{|1+is|^\ell |1+iz||1-iz+is|}\ll\frac {Z^\ell  X}{|1+is|^\ell |1+iz||1-iz+is|},\label{ab1}\\
\partial_z J(z,s)&\ll\frac {\frac{H^{\ell}}{X^{\ell-1}}(1+|z|+|z-s|)^\ell+X}{|1+is|^3|1+iz||1-iz+is|}\Big(\frac HX+\frac1{1+\min(|z|,|z-s|)}\Big)\\
&\ll\frac {Z^{\ell+1} H}{|1+is|^\ell|1+iz||1-iz+is|},\label{ab2}
\end{align}
%
for $\frac{X}{ZH}\ll |z|,|z-s|\ll \frac{ZX}{H}$. Thus, using
\begin{align*}
\frac{\zeta(\tfrac12+iz)^k\zeta(\tfrac12-iz+is)^k}{(\frac12+iz)(\frac12-iz+is)}&\ll |\zeta(\tfrac12+iz)|^{2k}|\tfrac12+iz|^{-2}+|\zeta(\tfrac12-iz+is)|^{2k}|\tfrac12-iz+is| ^{-2},
\end{align*}
and~\eqref{ab1}, we deduce by Corollary~\ref{corcon} that the contribution of $|s|\gg (\log X)^\eta$ to $\mathcal V_k(X,H)$ is $O(Z^{1/\eta+4}(\log X)^{k^2-2})$. For the contribution of $(\log X)^{-1+\eps}<|s|<( \log X)^\eta$, we integrate by parts in $z$. Using~\eqref{ab1}-\eqref{ab2} and Conjecture~\ref{shifted moments conjecture}, we have that this contribution is bounded by
\begin{align*}
&\ll Z^5 (\log X)^{k^2} \frac{H}{X}\frac1{(2\pi)^2 }\int_{\frac{X}{ZH}<|z|,|z-s|<\frac{ZX}{H},\atop (\log X)^{-1+\eps}<|s|<(\log X)^\eta}  (1+\min(1,|s|)\log X)^{-k^2/2} \,\frac{dz_1 ds}{(1+|s|)^3}\\
&\ll Z^6 (\log X)^{\frac{k^2}2}+ Z^6 (\log X)^{k^2-1-\eps}
\end{align*}
which is $o((\log X)^{k^2-1})$ if we take $Z:=(\log \log X)^4\ll (\log X)^{ \eps /7}.$ 

For the range $|s|<(\log X)^{-1+\eps}$, we use integration by parts, apply Conjecture~\ref{shifted moments conjecture}, and integrate by parts again (estimating the extremes both times) and obtain

 \begin{align*}
\mathcal V_k(X,H)&=-\frac {a_k}{H k!}\frac1{(2\pi i)^{2+k} }\int_{|s|<(\log X)^{-1+\eps}}\int_{\frac{X}{ZH}<|z|<\frac{ZX}{H}} \iint_{R^k}\bigg(\sum_{j=1}^kw_j +1\bigg) |z|^{\sum_{j=1}^kw_j}\\
&\quad\times\prod_{i=1}^kw_i^{-k}(w_i+s)^{-k}\prod_{i\neq j}(w_i-w_j)  \boldsymbol {dw} J(z,s)\,dz_1 ds+o(\log X)^{k^2-1})
\end{align*}
for $R$ the rectangle of vertexes $\pm 1/(\log X)\pm i /(\log X)^{1-2\eps}$. Now, on the domain of integration we have 
\begin{align*}
&\log |z| \sum_{j=1}^kw_j = \log (X/H)+O(\log Z/(\log X )^{-1}),\\
&J(z,s)=\frac1X\int_{\R}g(x/X)\frac{F(z,0,H/x)+O(ZH/(X\log X))}{|\frac12-iz|^2}x X^{s}\bigg(1+O\Big(\frac{\log Z}{(\log X)^{1-\eps}}\Big)\bigg)\,dx.
\end{align*}
It follows that we can replace $|z|^{\sum_{j=1}^kw_j}$ by $|X/H|^{\sum_{j=1}^kw_j}$, $\sum_{j=1}^kw_j +1$ by $1$, and $J(z,s)$ by $X^{s}J(z,0)$ at the cost of a negligible error. 
Thus,
 \begin{align*}
\mathcal V_k(X,H)&=-\frac {a_k}{H k!}\frac1{(2\pi i)^{2+k} }\int_{|s|<(\log X)^{-1+\eps}}\int_{\frac{X}{ZH}<|z|<\frac{ZX}{H}} \iint_{R^k} \Big(\frac{X}{H}\Big)^{s} |X/H|^{\sum_{j=1}^kw_j}\\
&\quad\times\prod_{i=1}^kw_i^{-k}(w_i+s)^{-k}\prod_{i\neq j}(w_i-w_j)  \boldsymbol {dw} X^sJ(z,0)\,dz ds+o(\log X)^{k^2-1}).
\end{align*}
Now
\begin{align*}
&\frac {i}{2\pi i}\int_{\frac{X}{ZH}<|z|<\frac{ZX}{H}} J(z,0)= \frac1{2\pi }\int_{\R}\frac1X\int_{\R}g(x/X)x\frac{F(z,0,H/x)}{|\frac12+iz|^2} \,dx dz+O\Big(\frac{H}{Z}\Big)\\
&\qquad= \frac1X\int_{\R}g(x/X)\frac1{2\pi i}\int_{\frac12-i\infty}^{\frac12+i\infty} \frac{2+H/x-(1+H/x)^{1-z}-(1+H/x)^{z}.}{z(1-z)} \, dz\, x\,dx+O\Big(\frac{H}{Z}\Big)\\
&\qquad=\int_\R g(x)\,dx\cdot H+O\Big(\frac{H}{Z}\Big),
\end{align*}
by the residue theorem. Recalling that $X/H=X^{1/\alpha}$, it follows that
 \begin{align*}
\mathcal V_k(X,H)&=\int_\R g(x)\,dx\cdot \frac {a_k}{ k!}\frac i{(2\pi i)^{1+k} }\int_{|s|<(\log X)^{-1+\eps}} \iint_{R^k} (X^{1/\alpha})^{\alpha s +\sum_{j=1}^kw_j}\\
&\quad\times\prod_{i=1}^kw_i^{-k}(w_i+s)^{-k}\prod_{i\neq j}(w_i-w_j)  \boldsymbol {dw}  ds+o(\log X)^{k^2-1}).
\end{align*}
We then obtain the claim result, by extending the integral over $s$, moving the line to $\Re(s)=\frac1{\log X}$, and proceeding in the same way as in the proof of Theorem~\ref{t1}.

\subsection{Proof of Theorem~\ref{tt3}}
%

First, assume $\alpha<k+\eps/2$. Writing
\begin{align*}
I_\chi(X):=\sum_{n\leq X}d_k(n)\chi(n)-\operatorname{Res}_{s=1}\Big(L(s,\chi)^{k}\frac{X^s}s\Big),
\end{align*}
by orthogonality we find
\begin{align}\label{wf}
\mathcal W_k(X;q)
&=\frac1X\sum_{1\leq a\leq q\atop (a,q)=1}\bigg|\frac1{\varphi(q)}\sum_{\chi\textnormal{ mod } q}\overline{\chi(a)}I_\chi(X)\Big)\bigg|^2
=\frac1{\varphi(q)}\sum_{\chi\textnormal{ mod } q}|X^{-1/2}I_{\chi}(X)|^2.
\end{align} 
We use~\cite[Lemma 3.12]{Tit} to write
\begin{align}\label{fma}
X^{-1/2} I_{\chi}(x)&=\frac1{2\pi i}\int_{c-i T}^{c+iT}L(\tfrac12+z,\chi)^k \frac{X^{z}}{\frac12+z}\,dz+O(X^{-\eps/2}),
 \end{align}
 with $c=1/\log X$ and $T=X^{1+\frac{\eps}{2k}}$.  Inserting this expression into~\eqref{wf} and applying Conjecture~\ref{shifted moments conjecture dirichlet}, we arrive to an expression almost identical to~\eqref{mainterm1}, with the only difference that now we have a factor of $\frac{-1}{(\frac12+z_1)(\frac12-z_2)}$ instead of $\frac1{z_1z_2}$. Then one concludes following the same steps as in  the proof of Theorem~\ref{t1}. Notice that in this case we obtain $\gamma_k$ rather its integral because in this case we have $\frac1{1+z}$ instead of $\frac1z$ in~\eqref{ifac}.

If $\alpha\geq k+\eps/2$, one moves the lines of integration in~\eqref{wf} to $\Re(z)=-\frac14$. Bounding trivially one obtains that $X^{-1/2}I_{\chi}(x)=o(1)$ and the result follows.

\section{Averages of secular coefficients}\label{rmt}
In the following proposition we show that the Random Matrix Theory analogue of $\mathcal I_k$ is asymptotically $N^{k^2}$ times the integral of $\gamma_k$, thus proving (unconditionally) the RMT analogue of Theorem~\ref{scd}. It also gives an alternative (simpler) complex analytic proof of~\eqref{ffik} (cf. in particular,~\cite[Theorem 1.6 and Section~4.4]{KRRR}).

\begin{proposition}
For $n,m \in\N$ we have
\begin{align}
\tilde I_k(n,N)&:=\int_{U(N)}\bigg|\sum_{j_1+\cdots+j_k \leq m\atop 0\leq j_1,\dots,j_k\leq N}\Sc_{j_1}(A)\cdots \Sc_{j_k}(A)\bigg|^2\,dA,\notag\\
&=N^{k^2}\int_0^{n/N}\gamma_k(x)\,dx+O_k(N^{k^2-1})\label{if1}\\
&=\frac{N^{k^2}}{k^2!}\sum_{0\leq r<n/N}\binom{k}r P_{r,k}(n/N)+O_k(N^{k^2-1}).\label{if2}
\end{align}
\end{proposition}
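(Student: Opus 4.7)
The strategy is to follow the proof of Theorem~\ref{t1} nearly verbatim, with the \emph{exact} CFKRS moment identity for characteristic polynomials on $U(N)$ playing the role of Conjecture~\ref{shifted moments conjecture}. Since the $U(N)$ identity is an exact formula (rather than only asymptotic), no additional input is required and the proof is in fact shorter than that of Theorem~\ref{t1}.

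Set $\alpha:=n/N$. Since $\Sc_j(A)=0$ for $j>N$, the inner sum is precisely the $n$-th partial sum of the Taylor coefficients of $\Lambda_A(z)^k$, so for $0<r<1$,
$$S(A):=\sum_{j_1+\cdots+j_k\leq n}\Sc_{j_1}(A)\cdots\Sc_{j_k}(A) = \frac{1}{2\pi i}\oint_{|z|=r}\frac{\Lambda_A(z)^k}{z^{n+1}(1-z)}\,dz.$$
Taking $|S(A)|^2$, using $\overline{\Lambda_A(z)}=\Lambda_{A^*}(\bar z)$ for $A\in U(N)$ to dualise the conjugate integral, and integrating over $U(N)$ gives
$$\tilde I_k(n,N)=\frac1{(2\pi i)^2}\oint\oint \frac{J(z_1,z_2)\,dz_1\,dz_2}{z_1^{n+1}z_2^{n+1}(1-z_1)(1-z_2)}, \quad J(z_1,z_2):=\int_{U(N)}\Lambda_A(z_1)^k\Lambda_{A^*}(z_2)^k\,dA,$$
both contours being small circles of radius less than $1$ centred at the origin.

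Next, substitute the CFKRS formula for $J(z_1,z_2)$, which for $|z_1|,|z_2|<1$ expresses it as a $k$-dimensional contour integral of the same shape as the right-hand side of~\eqref{smce}, with $N$ in the role of $T$, $a_k$ replaced by $1$, and a shift $s$ depending on $z_1,z_2$. After interchanging the order of integration the inner $(z_1,z_2)$-integrals become explicit and can be evaluated by residues, contributing (up to an acceptable error) a factor of the type $N^{\alpha\cdot(\text{linear in }\boldsymbol w)}/z$ that plays the role of the factor $N^{z}/z$ in~\eqref{mainterm1} of Theorem~\ref{t1}.

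From this point the argument follows the computation in Theorem~\ref{t1} step for step: splitting each $w_i$-contour into two pieces encircling $0$ and $-s$ produces a sum of $2^k$ sub-integrals indexed by $\boldsymbol\delta\in\{0,1\}^k$; those with $\sum_j\delta_j\geq\alpha$ vanish by pushing the contour to infinity, while each surviving sub-integral, after the rescaling $w_i\mapsto zw_i$ and a single residue evaluation, contributes $\binom{k}{r}P_{r,k}(\alpha)/k^2!$ (with $r=\sum_j\delta_j$) times $N^{k^2}$. This yields~\eqref{if2}, and~\eqref{if1} then follows from Corollary~\ref{cff} (which identifies $\frac{1}{k^2!}\sum_{0\leq r<\alpha}\binom{k}{r}P_{r,k}(\alpha)$ with the antiderivative of $\gamma_k$). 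The main obstacle is merely bookkeeping: since the moment identity on $U(N)$ is exact, all errors $O_k(N^{k^2-1})$ arise only from the contour manipulations, precisely as in Theorem~\ref{t1}.
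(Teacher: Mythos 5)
There is a genuine gap in your derivation of \eqref{if1}: you deduce it from \eqref{if2} by invoking Corollary~\ref{cff}, but in the paper Corollary~\ref{cff} is itself a \emph{consequence} of this proposition, obtained precisely by comparing \eqref{if1} and \eqref{if2}; it has no independent proof. Your argument is therefore circular. The paper establishes \eqref{if1} by a separate route: using the rotation invariance of Haar measure it writes $\int_{U(N)}\Lambda_A(z_1)^k\Lambda_{A^*}(z_2)^k\,dA=F_N(z_1z_2)$, reduces both $\sum_{j\leq n}I_k(j,N)$ and $\tilde I_k(n,N)$ to the \emph{same} single contour integral $-\frac1{2\pi i}\int_{|s|=\eta^2}F_N(s)s^{-n-1}(s-1)^{-1}\,ds$, and then imports the known asymptotic \eqref{ffik} of Keating--Rodgers--Roditty-Gershon--Rudnick for $I_k(j,N)$. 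Without this (or some other independent identification of $\sum_r\binom{k}{r}P_{r,k}(\alpha)/k^2!$ with $\int_0^\alpha\gamma_k$), your proof only yields \eqref{if2}.

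Your plan for \eqref{if2} is essentially the paper's, but two points are glossed over. First, you keep a genuinely two-dimensional $(z_1,z_2)$-integral against the CFKRS formula; the reduction to a single variable via $F_N(z_1z_2)$ and the substitution $z_1=s/z_2$ is what makes the subsequent residue computation clean, and you should make it explicit. Second, the claim that the error analysis proceeds ``precisely as in Theorem~\ref{t1}'' is optimistic: in this discrete setting one must split $F_N=\sum_r\binom{k}{r}F_{N,r}$ by separating the $w_i$-contours around $0$ and $\log z$, show the terms with $Nr\geq n+1$ vanish, and for the remaining terms move the $z$-contour to $|z|=2$ and bound the resulting contribution by counting the order of the pole at $w_1=\cdots=w_k=0$ (giving $E=O(N^{-2+3k-2r+2kr-2r^2})=O(N^{k^2-1})$), before linearising $z^{rN-n}$, $1-z^{\delta}e^{-w_i}$, etc.\ near $z=1$ to reach an integral of the shape \eqref{ifac}. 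These steps are analogous in spirit to Theorem~\ref{t1} but not identical, and they are where the $O_k(N^{k^2-1})$ error actually comes from.
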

\begin{proof}
For $\eta<1$ we have
\begin{align*}
 I_k(n,N):=\int_{U(N)}\int_{|z_1|=\eta}\frac1{(2\pi i)^2}\int_{|z_2|=\eta}\Lambda_A(z_1)^k\Lambda_{A^*}(z_2)^k\frac{dz_1 dz_2}{(z_1z_2)^{n+1}}dA,
\end{align*}
where $A^*$ is the conjugate transpose of $A$. Now, we have
\begin{align*}
\int_{U(N)}\Lambda_A(z_1)^k\Lambda_{A^*}(z_2)^kdA=F_N(z_1z_2)
\end{align*}
for some polynomial $F_N$ of degree $kN$ (the fact that $F_N$ depends on the product $z_1z_2$ only follows from the invariance of Haar measure of $U(N)$ under multiplication by unit scalars). Making the change of variables $z_1=s/z_2$ we then find that 
\begin{align*}
 I_k(n,N)&=\frac1{(2\pi i)^2}\int_{|s|=\eta^2}\int_{|z|=\eta}F_N(s)\frac{ds dz}{zs^{n+1}}=\frac1{(2\pi i)^2}\int_{|s|=\eta^2}F_N(s)\frac{ds}{s^{n+1}}.
\end{align*}
Now, 
\begin{align*}
\sum_{j=1}^n\frac1{s^{n+1}}=\frac{1}{s^{n+1}}\frac{s^{n+1}-1}{s-1}=\frac1{s-1}-\frac{1}{s^{n+1}(s-1)}
\end{align*}
and so
\begin{align*}
\sum_{j=1}^n I_k(n,N)&=\frac1{2\pi i}\int_{|s|=\eta^2}F_N(s)\pr{\frac1{s-1}-\frac{1}{s^{n+1}(s-1)}}ds\\
&=-\frac1{2\pi i}\int_{|s|=\eta^2}\frac{F_N(s)}{s^{n+1}(s-1)}ds
\end{align*}
by the residue theorem. By~\eqref{ffik} we also have 
\begin{align*}
\sum_{j=1}^n I_k(n,N)=N^{k^2}\int_0^{n/N}\gamma_k(x)\,dx+O_k(N^{k^2-1})
\end{align*}
and so
\begin{align*}
\int_0^{n/N}\gamma_k(x)\,dx=-\frac1{2\pi i}\int_{|s|=\eta^2}\frac{N^{-k^2}F_N(s)}{s^{n+1}(s-1)}ds+O_k(N^{-1}).
\end{align*}
On the other hand, proceeding in the same way, we have
\begin{align*}
\tilde I_k(n,N)&=\sum_{m_1,m_2\leq n}\int_{U(N)}\frac1{(2\pi i )^2}\int_{|z_1|=\eta}\int_{|z_2|=\eta}\Lambda_A(z_1)^k\Lambda_{A^*}(z_2)^k\frac{dz_1 dz_2}{z_1^{m_1+1}z_2^{m_2+1}}dA\\
&=\int_{U(N)}\frac1{(2\pi i )^2}\frac1{(2\pi i )^2}\int_{|z_1|=\eta}\int_{|z_2|=\eta}\frac{\Lambda_A(z_1)^k\Lambda_{A^*}(z_2)^k}{z_1^{n+1}(z_1-1)}\frac{1}{z_2^{n+1}(z_2-1)}\, dz_1 dz_2dA\\
& =\frac1{2\pi i}\int_{|s|=\eta^2}\int_{|z|=\eta}\frac{F_N(s)}{s^{n+1}(s-z)(z-1)}\, ds dz\\
& =-\frac1{2\pi  i}\int_{|s|=\eta^2}\frac{F_N(s)}{s^{n+1}(s-1)}\, ds 
\end{align*}
and~\eqref{if1} follows. 

It remains to show~\eqref{if2}. By~\cite{CFKRS} (see also~\cite[Lemma 3]{CRS}) if $\frac12<z<1$ we have
\begin{align*}
F_N(z)&=\frac1{k!(2\pi i)^k}\iint_{|w_i|=1}e^{N\sum_{i=1}^kw_i}\prod_{i\neq j,\atop 1\leq i,j\leq k}(1-e^{-w_i+w_j})\prod_{i=1}^k(1-e^{-w_i})^{-k}(1-ze^{-w_i})^{-k}\,\boldsymbol{dw}.
\end{align*}
We assume also $z\leq1-1/N$ and we express each integral over $w_i$ as a sum of two integrals along circles of radius $\frac1{2N}$ around $0$ and $\log(z)$. We then obtain
\begin{align*}
F_N(z)=\sum_{r=0}^k\binom{k}r F_{N,r},\qquad
\tilde I_k(n,N)=-\sum_{r=0}^k\binom{k}r \frac1{2\pi  i}\int_{|s|=\eta^2}\frac{F_{N,r}(z)}{z^{n+1}(z-1)}\, dz,
\end{align*}
where
\begin{align*}
F_{N,r}(z)&=\frac1{(2\pi i)^k k!}\iint_{|w_i|=\frac1{2N}}z^{rN}e^{N\sum_{i=1}^kw_i}\prod_{i\neq j,\atop 1\leq i,j\leq k}(1-z^{-\delta_{i,r}+\delta_{j,r}}e^{-w_i+w_j})\\[-0.6em]
&\hspace{10em}\times\prod_{i=1}^k(1-z^{-\delta_{i,r}}e^{-w_i})^{-k}(1-z^{1-\delta_{i,r}}e^{-w_i})^{-k}\,\boldsymbol{dw}\\
&=\frac1{(2\pi i)^k k!}\iint_{|w_i|=\frac1{2N}}z^{rN+r^2+r}e^{N\sum_{i=1}^kw_i}\prod_{i\neq j,\atop 1\leq i,j\leq k}(z^{1-\delta'_{i,j,r}}-z^{\delta'_{i,j,r}}e^{-w_i+w_j})\\[-0.6em]
&\hspace{10em}\times\prod_{i=1}^k(z^{\delta_{i,r}}-e^{-w_i})^{-k}(1-z^{1-\delta_{i,r}}e^{-w_i})^{-k}\,\boldsymbol{dw},
\end{align*}
with $\delta_{i,r}:=1$ if $1\leq i\leq r$ and $\delta_{i,r}=0$ otherwise and $\delta'_{i,j,r}:=\max(0,-\delta_i+\delta_j)$.
 For $0<|z|<1-1/N$, $|w_i|=\frac1{2N}$ we have $|z e^{w_i}|<1$ and so the above expression holds by analytic continuation also for $z\in\C$, $|z|<1/N$. Inserting the above expression for $F_{N,r}$ and exchanging the order of integration we obtain, by Cauchy's theorem, that
\begin{align*}
\frac1{2\pi i}\int_{|z|=\delta^2}F_{N,r}(z)\frac{dz}{z^{n+1}(z-1)}=0,
\end{align*}
if $ N r -n -1 + r^2+r \geq0$ and so in particular if $N r\geq n+1$. If $Nr \leq n$ we again exchange the order of integration and move the circuit of integration to $|z|=2$, collecting residues at $z=e^{\pm w_i}$ and $z=0$. We express the contribution of the residues as an integral along a circle of radius $\frac1{2N}$ and center $1$. We then obtain 
\begin{align*}
\frac1{2\pi i}\int_{|z|=\eta^2}\frac{F_{N,r}(z) dz}{z^{n+1}(z-1)}&=\frac1{(2\pi i)^{k+1}}\int_{|z-1|=\frac1N}\iint_{|w_i|=\frac1{2N}}z^{rN+r^2+r-n-1}e^{N\sum_{i=1}^kw_i}\\[-0.6em]
&\hspace{-5em}\times\prod_{i\neq j,\atop 1\leq i,j\leq k}(z^{1-\delta'_{i,j,r}}-z^{\delta'_{i,j,r}}e^{-w_i+w_j})\prod_{i=1}^k(z^{\delta_{i,r}}-e^{-w_i})^{-k}(1-z^{1-\delta_{i,r}}e^{-w_i})^{-k}\,\boldsymbol{dw}+E,
\end{align*}
where $E$ is the contribution of the integral on the circuit $|z|=2$. If $ N r -n -1 + r^2+r \leq -2$ we can move this circuit to infinity and obtain that $E=0$. Otherwise, if $ n  - r^2-r  \leq N r \leq n$, then exchanging the order of integration  once again, we have that inside the paths of integration the integrand have a pole at $w_1=\cdots=w_k=0$ only. This pole is of order $k^2-r(r-1)-(k-r-1)(k-r-2)$ (a pole of order $k^2$ from the second product and a simple zero each time that $\delta'_{i,j,r}=0$); since we have $k$ variables we then obtain $E=O(N^{-2 + 3 k - 2 r + 2 k r - 2 r^2})$, where we also used that  in this case $z^{rN+r^2+r-n-1}=2^{O_k(1)}$. Since $\min_{r\in\R}(-2 + 3 k - 2 r + 2 k r - 2 r^2)\leq k^2-1$ it follows that $E=O(N^{k^2-1})$.

Finally, the integral along the circle $|z-1|=1/(2N)$ can be reduced, up to $O(N^{k^2-1})$ error, to an integral analogous to~\eqref{ifac} by using the approximations
\begin{align*}
& z^{rN-n+O_k(1)}=e^{s(rN-n)}(1+O_k(N^{-1})) && z^{1-\delta}-z^{\delta}e^{-w_i+w_j}=(1-2\delta)s+w_i-w_j+O(N^{-2})\\
&1-z^{\delta}e^{-w_i}=w_i-\delta s+O(N^{-2}) && z^{\delta}-e^{-w_i}=w_i+\delta s+O(N^{-2})
\end{align*}
for $s:=z-1=O(1/N)$ and $\delta\in\{0,1\}$. One then obtains~\eqref{if2} in the same way as above.
\end{proof}

\begin{corollary}\label{cff}
For $\alpha\geq0$ we have
$$
\frac1{k^2!}\sum_{0\leq r<\alpha}\binom{k}r P_{r,k}(\alpha)=\int_0^\alpha\gamma_k(x)\,dx.
$$
\end{corollary}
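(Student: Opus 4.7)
The corollary is essentially immediate from the proposition just proved: both sides have been shown to equal the same thing (namely $N^{-k^2}\tilde I_k(n,N)$) up to an error of order $N^{-1}$.

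\medskip

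\textbf{Plan.} Fix $\alpha\geq0$ and let $\alpha$ be rational of the form $\alpha=n/N$ with $n,N\in\N$. Comparing the two asymptotic expressions~\eqref{if1} and~\eqref{if2} for $\tilde I_k(n,N)$ provided by the preceding proposition, we get
\begin{equation*}
N^{k^2}\int_0^{n/N}\gamma_k(x)\,dx+O_k(N^{k^2-1})=\frac{N^{k^2}}{k^2!}\sum_{0\leq r<n/N}\binom{k}r P_{r,k}(n/N)+O_k(N^{k^2-1}).
\end{equation*}
Dividing by $N^{k^2}$ and letting $N\to\infty$ along multiples of the denominator of $\alpha$, the error terms vanish and we obtain the identity for every rational $\alpha\geq 0$.

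\medskip

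To pass from rational to real $\alpha$, I would invoke continuity of both sides as functions of $\alpha$. The left-hand side is continuous since $\gamma_k$ is locally bounded (in fact piecewise polynomial). For the right-hand side, each summand $\binom{k}{r}P_{r,k}(\alpha)/k^2!$ is polynomial in $\alpha$, so the only possible discontinuities occur at integer values $\alpha=r\in\{0,1,\dots,k\}$, where a new term enters the sum. Since $P_{r,k}$ has a zero of order at least $(k-r)^2+r^2\geq 1$ at $\alpha=r$ (as recalled just above Conjecture~\ref{smoothnes_conjecture}), the newly added term vanishes at the point of discontinuity, so the sum is in fact continuous in $\alpha$. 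Density of the rationals then yields the claim for all $\alpha\geq 0$.

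\medskip

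\textbf{Main obstacle.} There is essentially no obstacle — the corollary is a direct corollary of the preceding proposition, which supplies two independent asymptotic formulas for the same quantity. The only mild subtlety is verifying continuity of the right-hand side at integer $\alpha$, which is handled by the vanishing of $P_{r,k}$ at $\alpha=r$.
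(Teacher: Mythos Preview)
Your proposal is correct and is exactly the intended argument: the paper states this as a corollary with no proof, precisely because comparing \eqref{if1} with \eqref{if2} and dividing by $N^{k^2}$ gives the identity immediately for rational $\alpha=n/N$, and continuity extends it to all $\alpha\ge 0$. One small caution: you invoke the vanishing of $P_{r,k}$ at $\alpha=r$, which in the paper is Proposition~\ref{p1}, placed \emph{after} Corollary~\ref{cff} and whose first proof cites the corollary; however, Proposition~\ref{p1} also has a direct proof from the definition of $P_{r,k}$, so there is no circularity.
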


\begin{proposition}\label{p1}
Let $k\geq1$ and $0\leq r\leq k$. Then $P_{r,k}(\alpha)$ has a zero of order at least $(k-r)^2 + r^2$ at $\alpha=r$.
\end{proposition}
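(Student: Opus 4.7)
\noindent The plan is to translate every contour so that each $w_i$ is centred at $0$ and then read off the order of vanishing of $P_{r,k}$ at $\alpha=r$ from a simple degree count governed by two Vandermonde--squared factors.

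First I would substitute $w_i=u_i-1$ for $i=1,\dots,r$ and $w_i=u_i$ for $i>r$, and write $\alpha=r+\beta$. After this change of variables each $u_i$ runs over a circle of radius $\tfrac12$ around $0$, so the multiple integral is simply the residue at $u=0$. A direct check gives $\sum_j w_j+\alpha=\sum_j u_j+\beta$, $\prod_{i=1}^k w_i^{-k}(w_i+1)^{-k}=H(u)\prod_{i=1}^k u_i^{-k}$ with $H(u):=\prod_{i\le r}(u_i-1)^{-k}\prod_{i>r}(u_i+1)^{-k}$ analytic and nonzero at $u=0$, and
\[
\prod_{i\neq j}(w_i-w_j)=V(u):=\prod_{\substack{i\neq j\\i,j\le r}}(u_i-u_j)\ \prod_{\substack{i\neq j\\i,j>r}}(u_i-u_j)\ \cdot(-1)^{r(k-r)}\prod_{\substack{i\le r\\j>r}}(1+u_j-u_i)^2.
\]
Up to an overall sign, the first two products are $\Delta_r(u_1,\dots,u_r)^2\,\Delta_{k-r}(u_{r+1},\dots,u_k)^2$, a polynomial homogeneous of total degree $D:=r(r-1)+(k-r)(k-r-1)=r^2+(k-r)^2-k$, while the mixed product is a polynomial equal to $1$ at $u=0$. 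Consequently the Taylor expansion of $V(u)H(u)$ at $u=0$ has no terms of total degree below $D$.

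Second, I would expand $(\sum_j u_j+\beta)^{k^2}=\sum_{c=0}^{k^2}\binom{k^2}{c}\beta^c(\sum_j u_j)^{k^2-c}$ and read off the coefficient of $\beta^c$ in $P_{r,k}(r+\beta)$, which equals $\binom{k^2}{c}/k!$ times
\[
\big[u_1^{k-1}\cdots u_k^{k-1}\big]\,(u_1+\cdots+u_k)^{k^2-c}\,V(u)H(u).
\]
Setting $M:=k^2-c$ and expanding the multinomial power, this coefficient is a sum over nonnegative vectors $\alpha$ with $|\alpha|=M$ and $\alpha_i\le k-1$, of multinomial coefficients times $[u_1^{k-1-\alpha_1}\cdots u_k^{k-1-\alpha_k}]VH$. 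Each such Taylor coefficient of $VH$ vanishes unless its total exponent $k(k-1)-M$ is at least $D$, i.e.\ unless $M\le k(k-1)-D=2r(k-r)$. Hence whenever $c<k^2-2r(k-r)=(k-r)^2+r^2$ every term is zero, giving $P_{r,k}(r+\beta)=O(\beta^{(k-r)^2+r^2})$ as required.

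The whole argument is essentially a degree count and no step should present a real obstacle; the only point needing a little care is identifying the leading order of $V(u)$ at $u=0$, which comes down to recognising the two diagonal products as squares of Vandermonde determinants and the mixed product as a unit in the local ring at $u=0$.
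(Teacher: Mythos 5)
Your proof is correct and is essentially the paper's own (second) argument written out in full: the paper likewise translates the poles to a common point and performs the same degree count, $-rk-(k-r)k+r(r-1)+(k-r)(k-r-1)+k^2+k=(k-r)^2+r^2$, which is exactly your bookkeeping of the pole orders, the two Vandermonde-squared factors, the $(\sum_j u_j+\beta)^{k^2}$ expansion, and the degree shift from extracting the $k$-fold residue. The only quibble is notational: you reuse $\alpha$ both for the argument of $P_{r,k}$ and for the multi-index in the multinomial expansion, which should be renamed.
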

\begin{proof}
This follows from the previous Corollary and from the smoothness properties of $\gamma_k$. It can also be deduced directly from the definition of $P_{r,k}$. Indeed, when $\alpha=r$ the integrand in the definition of $P_{r,k}$ has poles of order $k$ at $w_i=-1$ if $i\leq r$ and at $w_i=0$ if  $r< w_i\leq k$. Then the Vandermonde determinant gives a zero of multiplicity $r(r-1)$ when $w_i=-1$, $1\leq i\leq r$, and a zero of multiplicity $(k-r)(k-r-1)$ when $w_i=0$, $r<i\leq k$, whereas $(\sum_{j=1}^kw_j+\alpha)^{k^2}$ gives a zero of order $k^2$. Then, considering we are integrating over $k$ variables, we have that  $P_{r,k}$ has a zero of order $-rk-(k-r)k+r(r-1)+(k-r)(k-r-1)+k^2+k=(k-r)^2 + r^2$.
\end{proof}


\begin{thebibliography}{0}
\bibitem{BGR}
Basor, E.; Ge, F.; Rubinstein, M. O. \emph{Some multidimensional integrals in number theory and connections with the Painlev\'e V equation}. J. Math. Phys. 59 (2018), no. 9, 091404, 14 pp.
\bibitem{Bet}
Bettin, S. \emph{The second moment of the Riemann zeta function with unbounded shifts}. Int. J. Number Theory 6 (2010), no. 8, 1933--1944. 
\bibitem{BBLR}
Bettin, S.; Bui, H. M.; Li, X.; Radziwi\l\l, M. \emph{A quadratic divisor problem and moments of the Riemann zeta-function}. Preprint, arXiv:1609.02539.
\bibitem{Blo}
Blomer, V.: \emph{The average value of divisor sums in arithmetic progressions}. Q. J. Math. 59, 275--286 (2008).
\bibitem{CFKRS}
Conrey, J. B.; Farmer, D. W.; Keating, J. P.; Rubinstein, M. O.; Snaith, N. C. { \it Autocorrelation of random matrix polynomials}. Comm. Math. Phys. 237 (2003), no. 3, 365-395.
\bibitem{CFKRS2}
Conrey, J. B.; Farmer, D. W.; Keating, J. P.; Rubinstein, M. O.; Snaith, N. C. { \it Integral moments of $L$-functions} Proc. London Math. Society, 91 (2005), 33-104.
\bibitem{CGh}
Conrey, J. B.; Ghosh, A. \emph{A conjecture for the sixth power moment of the Riemann zeta-function}.  Internat. Math. Res. Notices,  1998, 15, 775--780.
\bibitem{CGo}
Conrey, J. B.; Gonek, S.M. \emph{High moments of the Riemann zeta-function}.  Duke Math. J.  107  (2001),  no. 3, 577--604.
\bibitem{CIS}
Conrey, J. B.; Iwaniec, H.; Soundararajan, K. \emph{The sixth power moment of Dirichlet $L$-functions}. Geom. Funct. Anal. 22 (2012), no. 5, 1257--1288.
\bibitem{CK1}
Conrey, J. B.; Keating, J. P. \emph{Moments of zeta and correlations of divisor-sums: I}.  Philos. Trans. Roy. Soc. A 373 (2015), no. 2040, 20140313, 11 pp.  
\bibitem{CK2}
Conrey, J. B.; Keating, J. P. \emph{Moments of zeta and correlations of divisor-sums: II}. Advances in the theory of numbers, 75--85, Fields Inst. Commun., 77, Fields Inst. Res. Math. Sci., Toronto, ON, 2015.
\bibitem{CK3}
Conrey, J. B.; Keating, J. P. \emph{Moments of zeta and correlations of divisor-sums: III}.  Indag. Math. (N.S.) 26 (2015), no. 5, 736--747. 
\bibitem{CK4}
Conrey, J. B.; Keating, J. P. \emph{Moments of zeta and correlations of divisor-sums: IV}.  Res. Number Theory 2 (2016), Art. 24, 24 pp.
\bibitem{CK5}
Conrey, J. B.; Keating, J. P. \emph{Moments of zeta and correlations of divisor-sums: V}. Proc. Lond. Math. Soc. (3) 118 (2019), no. 4, 729--752. 
\bibitem{CR}
Conrey, J. B.; Rodgers, B. \emph{Averages of quadratic twists of long Dirichlet polynomials}. Preprint.
\bibitem{CRS}
Conrey, J. B.; Rubinstein, M. O.; Snaith, N. C. \emph{Moments of the derivative of characteristic polynomials with an application to the Riemann zeta function}. Comm. Math. Phys. 267 (2006), no. 3, 611--629.
\bibitem{CS}
Conrey J. B. and Snaith N. C. \emph{Applications of the $L$-functions ratios conjectures.} Proc. Lond. Math. Soc. 94 (2007), no. 3, 594--646.
\bibitem{CSa}
Coppola, G., Salerno, S.: On the symmetry of the divisor function in almost all short intervals. Acta Arith. 113, 189--201 (2004).
\bibitem{dF}
de la Bret\`eche, R.; Fiorilli, D. \emph{Major arcs and moments of arithmetical sequences}. Amer. J. Math. 142, no. 1, Feb. 2020,  45--77.
\bibitem{GL}
Ge, F.; Liu, G. \emph{A combinatorial identity and the finite dual of infinite dihedral group algebra}. Preprint, arXiv:2005.01410.
\bibitem{GH}
Goldfeld, D.; Hoffstein, J. \emph{Eisenstein series of $\frac12$-integral weight and the mean value of real Dirichlet $L$-series}. Invent. Math. 80 (1985), no. 2, 185-208.
\bibitem{GGM}
Goldston, D. A.; Gonek, S.M; Montgomery, H. L. \emph{Mean values of the logarithmic derivative of the Riemann zeta-function with applications to primes in short intervals}. J. Reine Angew. Math. 537 (2001), 105--126. 
\bibitem{GR}
Gorodetsky, O.; Rodgers, B. \emph{The variance of the number of sums of two squares in $\mathbb F_q[T]$ in short intervals}. Preprint, arXiv:1810.06002.
\bibitem{HKR}
Hall, C.; Keating, J. P.; Roditty-Gershon, E. \emph{Variance of arithmetic sums and L-functions in $\F_q[t]$}. Algebra Number Theory 13 (2019), no. 1, 19--92. 
\bibitem{HL}
Hardy, G. H.; Littlewood, J. E. \emph{Contributions to the theory of the Riemann zeta-function and the theory of the distribution of primes}. Acta Mathematica 41 (1918), 119 - 196.
\bibitem{HS}
Harper, A. J.; Soundararajan, K. \emph{Lower bounds for the variance of sequences in arithmetic progressions: primes and divisor functions}. Q. J. Math. 68 (2017), no. 1, 97--123.
\bibitem{HY}
Hughes, C. P.; Young, M. P. \emph{The twisted fourth moment of the Riemann zeta function}. J. Reine Angew. Math. 641 (2010), 203--236
\bibitem{Ing}
Ingham, A. E. \emph{Mean-values theorems in the theory of the Riemann zeta-function}. Proc. Lond. Math. Soc., 27, 1926, p. 273--300.
\bibitem{Ivi}
Ivi\'c, A. \emph{On the mean square of the divisor function in short intervals}. J. Théor. Nombres Bordeaux 21 (2009), no. 2, 251--261.
\bibitem{Ivi2}
Ivi\'c, A. \emph{On the divisor function and the Riemann zeta-function in short intervals}. Ramanujan J. 19 (2009), no. 2, 207--224. 
\bibitem{Jut}
Jutila, M. \emph{On the divisor problem for short intervals}. Studies in honour of Arto Kustaa Salomaa on the occasion of his fiftieth birthday. Ann. Univ. Turku. Ser. A I No. 186 (1984), 23--30.
\bibitem{KRRR}
Keating, J. P.; Rodgers, B.; Roditty-Gershon, E.; Rudnick, Z. \emph{Sums of divisor functions in $\mathbb F_q[t]$ and matrix integrals}. Math. Z. 288 (2018), no. 1-2, 167--198.
\bibitem{KS}
Keating, J. P.; Snaith, N. C. \emph{Random matrix theory and $\zeta\pr{\frac12 + it} $}. Comm. in Math. Phys. 214 (2000), 57Ð89.
\bibitem{KR}
Kowalski, E., Ricotta, G. \emph{Fourier coefficients of $\textnormal{GL(N)}$ automorphic forms in arithmetic progressions}. Geom. Funct. Anal. 24, 1229--1297.
\bibitem{LZ}
Lau, Y.K.; Zhao, L. \emph{On a variance of Hecke eigenvalues in arithmetic progressions}. J. Number Theory 132(5), 869--887 (2012).
\bibitem{Les}
Lester, S. \emph{On the variance of sums of divisor functions in short intervals}. Proc. Amer. Math. Soc. 144 (2016), no. 12, 5015--5027.
\bibitem{MR1}
Matom\"aki, K.; Radziwi\l\l, M. \emph{Multiplicative functions in short intervals}. Ann. of Math. (2) 183 (2016), no. 3, 1015--1056. 
\bibitem{MR2}
Matom\"aki, K.; Radziwi\l\l, M. \emph{A note on the Liouville function in short intervals}. Preprint, arXiv:1502.02374. 
\bibitem{MT}
Milinovich, M. B.; Turnage-Butterbaugh, C. L. \emph{Moments of products of automorphic L-functions}. J. Number Theory 139 (2014), 175?204.
\bibitem{MV}
Montgomery, H.L.; Vaughan, R.C. \emph{Hilbert's inequality}. J. London Math. Soc. (2) 8 (1974), 73--82.
\bibitem{Mot}
Motohashi, Y. \emph{On the distribution of the divisor function in arithmetic progressions}. Acta Arith. 22, 175--199 (1973).
\bibitem{RS}
Rodgers, B.; Soundararajan, K. \emph{The variance of divisor sums in arithmetic progressions}. Forum Math. 30 (2018), no. 2, 269--293.
\bibitem{SV}
Saffari B.; Vaughan, R. C. \emph{On the fractional parts of $x/n$ and related sequences. II}, Ann. Inst. Fourier (Grenoble) 27 (1977), v, 1--30.
\bibitem{Shi}
Shimomura, S. \emph{Shifted fourth moment of the Riemann zeta-function}. Acta Math. Hungar. 137 (2012), no. 1-2, 104--129.
\bibitem{Tit}
Titchmarsh, E.C. {\it The Theory of the Riemann Zeta-function}. Oxford Science Publications, second edition, 1986.

\end{thebibliography}
 \end{document}